\newtheorem{theorem}{Theorem}[section]
\newtheorem{lemma}[theorem]{Lemma}
\newtheorem{conjecture}[theorem]{Conjecture}
\newtheorem{metatheorem}[theorem]{Metatheorem}
\newtheorem{Example}[theorem]{Example}
\newtheorem{Remark}[theorem]{Remark}
\newenvironment{remark}{\begin{Remark}\rm}{\end{Remark}}
\newtheorem{Remarks}[theorem]{Remarks}
\newtheorem{Question}[theorem]{Question}
\newtheorem{Summary}[theorem]{Summary}
\newenvironment{summary}{\begin{Summary}\rm}{\end{Summary}}
\newcommand{\al}{\alpha}
\newcommand{\be}{\beta}
\newcommand{\ga}{\gamma}
\newcommand{\Ga}{\Gamma}
\newcommand{\eps}{\varepsilon}
\newcommand{\la}{\lambda}
\newcommand{\La}{\Lambda}
\newcommand{\si}{\sigma}
\newcommand{\Si}{\Sigma}
\newcommand{\ze}{\zeta}
\let\cal=\mathcal
\let\Bbb=\mathbb
\begin{document}

\title[Remainder PPC, Spring 2008]{Lower bound for the
remainder\\ in the Prime-Pair Conjecture}

\subjclass[2000]{11P32}

\date{Spring 2008}

\author{Jacob Korevaar}

\begin{abstract}
Taking $r>0$ let $\pi_{2r}(x)$ denote the number of prime
pairs $(p,\,p+2r)$ with $p\le x$. The prime-pair
conjecture of Hardy and Littlewood (1923) asserts
that $\pi_{2r}(x)\sim 2C_{2r}\,{\rm li}_2(x)$ with an
explicit constant $C_{2r}>0$. A heuristic argument
indicates that the remainder $e_{2r}(x)$ in this
approximation cannot be of lower order than $x^\be$, where
$\be$ is the supremum of the real parts of zeta's zeros.
The argument also suggests an approximation for
$\pi_{2r}(x)$ similar to one of Riemann for $\pi(x)$.
\end{abstract}

\maketitle

\setcounter{equation}{0}    
\section{Introduction} \label{sec:1}
For $r\in{\Bbb N}$ let $\pi_{2r}(x)$ denote the number of
prime pairs $(p,\,p+2r)$ with $p\le x$. The famous
prime-pair conjecture (PPC) of Hardy and
Littlewood \cite{HL23} asserts that for $x\to\infty$,
\begin{equation} \label{eq:1.1}
\pi_{2r}(x)\sim 2C_{2r}{\rm li}_2(x)=
2C_{2r}\int_2^x\frac{dt}{\log^2 t}\sim
2C_{2r}\frac{x}{\log^2 x}.
\end{equation}
Here $C_2$ is the `twin-prime constant',
\begin{equation} \label{eq:1.2}
C_2 = \prod_{p\,{\rm
prime},\,p>2}\,\left\{1-\frac{1}{(p-1)^2}\right\}
\approx 0.6601618,
\end{equation}
and the general `prime-pair constant' $C_{2r}$ is given by
\begin{equation} \label{eq:1.3}
C_{2r} = C_2\prod_{p|r,\,p>2}\frac{p-1}{p-2}.
\end{equation}
No proof of (\ref{eq:1.1}) is in sight, but our
arguments make it plausible that
the best asymptotic estimate for the remainder 
\begin{equation} \label{eq:1.4}
e_{2r}(x)\stackrel{\mathrm{def}}{=}
\pi_{2r}(x)-2C_{2r}{\rm li}_2(x)
\end{equation}
cannot be as small as $x^{1/2}/\log^2 x$;
see Section \ref{sec:10}. 

For the following we set
\begin{equation} \label{eq:1.5}
\be\stackrel{\mathrm{def}}{=} \sup_\rho\,{\rm Re}\,\rho,
\end{equation}
where $\rho$ runs over the complex zeros of
$\ze(s)=\ze(\si+i\tau)$. Recall that Riemann's Hypothesis
(RH) asserts that $\be=1/2$. For the case of the prime
number theorem it is known that the remainder
$$e(x)\stackrel{\mathrm{def}}{=}\pi(x)-{\rm li}(x)
=\sum_{p\le x}\,1-\int_2^x\frac{dt}{\log t}$$
is $\cal{O}(x^{\be+\eps})$ for every $\eps>0$, but
cannot be $\cal{O}(x^{\be-\eps})$ for any $\eps>0$.
Indeed, a formula from Riemann's work suggests
the approximation
\begin{equation} \label{eq:1.6}
\pi(x)={\rm
li}(x) - (1/2)\,{\rm li}(x^{1/2}) - \sum_\rho {\rm
li}(x^\rho) + \cal{O}(x^b)
\end{equation} 
for any $b>\max\{1/3,\be/2\}$. Here the
sum over $\rho$ is a limit of `symmetric' partial sums;
it becomes significant for very large $x$. In 1895 von
Mangoldt obtained the following formula, from which he
derived a proof of (\ref{eq:1.6}); cf.\ Davenport
\cite{Da00}, Edwards \cite{Ed74}:
\begin{equation} \label{eq:1.7}
\psi(x)=\sum_{n\le
x}\,\La(n)=x-\sum_\rho\,\frac{x^\rho}{\rho}
-\frac{\ze'(0)}{\ze(0)}+\sum_k\frac{x^{-2k}}{2k}.
\end{equation}
The formula is exact for all $x>1$ where $\psi(x)$ is
continuous.

For prime pairs $(p,\,p+2r)$ one would
expect that
\begin{align} 
e_{2r}(x) &\ll x^{\be+\eps}\quad\mbox{for
every}\;\;\eps>0,
\quad\mbox{but}\label{eq:1.8} \\
e_{2r}(x) &\ll x^{\be-\eps}\quad\mbox{for no}\;\;\eps>0.
\label{eq:1.9}
\end{align}
Here the symbol $\ll$ is shorthand for the
$\cal{O}$-notation. Some time ago, Dan Goldston
\cite{Go06} suggested that the author's complex method
(now in \cite{Ko07}) might provide a good lower bound
for $e_{2r}(x)$. In this note we use such an approach to
obtain a conditional proof for
\begin{metatheorem} \label{the:1.1}
Statement $(\ref{eq:1.9})$ is correct.
\end{metatheorem} 
For our analysis we introduce an analog to $\psi(x)$:
\begin{equation} \label{eq:1.10}
\psi_{2r}(x)\stackrel{\mathrm{def}}{=}\sum_{n\le
x}\,\La(n)\La(n+2r).
\end{equation}
It is not difficult to see that the PPC (\ref{eq:1.1}) is
equivalent to the asymptotic relation
\begin{equation} \label{eq:1.11}
\psi_{2r}(x)\sim 2C_{2r}x\quad\mbox{as}\;\;x\to\infty.
\end{equation}
For our subsequent analysis it is convenient to work
with the following series of Dirichlet-type, where
$s=\si+i\tau$:
\begin{equation} \label{eq:1.12}
D_{2r}(s)\stackrel{\mathrm{def}}{=}\sum_{n=1}^\infty
\,\frac{\La(n)\La(n+2r)}{n^s(n+2r)^s}
=\int_1^\infty
\frac{d\psi_{2r}(t)}{t^s(t+2r)^s}\qquad(\si>1/2).
\end{equation}
Note that for the boundary behavior of $D_{2r}(s)$ as
$\si\searrow 1/2$, the denominators $n^s(n+2r)^s$ may be
replaced by $n^{2s}$. Hence by a two-way Wiener--Ikehara
theorem for Dirichlet series with positive coefficients,
the PPC in the form (\ref{eq:1.11}) is true if and only
if the difference
\begin{equation} \label{eq:1.13}
G_{2r}(s)=D_{2r}(s)-\frac{2C_{2r}}{2s-1}
\end{equation} 
has `good' boundary behavior as $\si\searrow 1/2$. That
is, $G_{2r}(\si+i\tau)$ should tend to a distribution
$G_{2r}\{(1/2)+i\tau\}$ which is locally equal to a
pseudofunction. By a pseudofunction we mean the
distributional Fourier transform of a bounded function
which tends to zero at infinity; see \cite{Ko05}. It
cannot have poles and is locally given by Fourier series
whose coefficients tend to zero. In
particular $D_{2r}(s)$ itself would have to show
pole-type behavior, with residue $C_{2r}$, for angular
approach of $s$ to $1/2$ from the right; there should be
no other poles on the line $\{\si=1/2\}$.

Heuristic arguments make it plausible that
$D_{2r}(s)$ has a meromorphic extension to some
half-plane $\cal{H}_\eps=\{\si>(\be-\eps)/2\}$ where
$\be=\sup {\rm Re}\,\rho\,$:
\begin{metatheorem} \label{the:1.2}
For every $r\in{\Bbb N}$ there is a number $\eps>0$ such
that 
\begin{equation} \label{eq:1.14}
D_{2r}(s)=\frac{2C_{2r}}{2s-1}
-4C_{2r}\sum_\rho\,\frac{1}{2s-\rho} + H_{2r}(s),
\end{equation}
where $H_{2r}(s)$ is holomorphic in $\cal{H}_\eps$.
\end{metatheorem}
Our approach would take care of Metatheorem \ref{the:1.1}
in the case $\be>1/2$. Metatheorem \ref{the:1.2} suggests
the following approximation for $\psi_{2r}(x)\,$:
\begin{metatheorem} \label{the:1.3}
For each $r$ there is a number $\eta>0$ such that
\begin{equation} \label{eq:1.15}
\psi_{2r}(x)=2C_{2r}x-4C_{2r}\sum_\rho\,x^\rho/\rho+
\cal{O}(x^{\be-\eta}).
\end{equation}
\end{metatheorem}

The case $\be=1/2$ of Metatheorem \ref{the:1.1} is more
subtle. It requires consideration of the function 
\begin{equation} \label{eq:1.16}
\theta_{2r}(x)\stackrel{\mathrm{def}}{=}
\sum_{p,\,p+2r\,{\rm prime};\;p\le x}\,\log^2
p=\int_2^{x+}(\log^2 t)d\pi_{2r}(t),
\end{equation} 
and the associated Dirichlet series
\begin{equation} \label{eq:1.17}
D^0_{2r}(s)\stackrel{\mathrm{def}}{=}\sum_{p,\,p+2r\,{\rm
prime}}\,\frac{\log^2
p}{p^{2s}}=\int_1^\infty\frac{d\theta_{2r}(t)}{t^{2s}}.
\end{equation}
Here our arguments suggest
\begin{metatheorem} \label{the:1.4}
If $\be>1/2$ there is a representation for $D^0_{2r}(s)$
similar to the one for $D_{2r}(s)$. However, if $\be=1/2$
one has 
\begin{equation} \label{eq:1.18}
D^0_{2r}(s)=\frac{2C_{2r}}{2s-1}-\frac{4C^*_{2r}}{4s-1}-
4C_{2r}\sum_\rho\,\frac{1}{2s-\rho}+H^0_{2r}(s),
\end{equation}   
with constants $C^*_{2r}>0$ and a function $H^0_{2r}(s)$
that is holomorphic for $\si>1/4$ and has `good' boundary
behavior as $\si\searrow 1/4$.
\end{metatheorem}
Metatheorems \ref{the:1.2} and \ref{the:1.4} lead to
plausible approximations for $\theta_{2r}(x)$ and
finally, $\pi_{2r}(x)\,$:
\begin{metatheorem} \label{the:1.5}
There are constants $C^*_{2r}>0$ such that
\begin{equation} \label{eq:1.19}
\pi_{2r}(x)= 2C_{2r}{\rm li}_2(x)
-C^*_{2r}{\rm li}_2(x^{1/2})-4C_{2r}\sum_\rho\,\rho\,{\rm
li}_2(x^\rho)+o(x^\be/\log^2 x).
\end{equation}
\end{metatheorem}
The constants $C^*_{2r}$ come from the special case of
the Bateman--Horn conjecture \cite{BH62}, \cite{BH65} that
involves the prime pairs $(p,\,p^2\pm 2r)$: the number
$\pi^*_{2r}(x)$ of such pairs with $p\le x$ should satisfy
an asymptotic relation
\begin{equation} \label{eq:1.20}
\pi^*_{2r}(x)\sim 2C^*_{2r}{\rm li}_2(x)
\quad\mbox{as}\;\;x\to\infty,
\end{equation}
with certain specific constants $C^*_{2r}$. The analysis
in Sections \ref{sec:8}--\ref{sec:10}, which includes
computations by Fokko van de Bult \cite{Bu08},
supports and utilizes 
\begin{metatheorem} \label{the:1.6}
The Bateman--Horn constants $C^*_{2r}$ in
$(\ref{eq:1.20})$ have mean value one {\rm (just like the
Hardy--Littlewood constants $C_{2r}$)}.
\end{metatheorem}

\setcounter{equation}{0}    
\section{Auxiliary functions}
\label{sec:2}
Integration by parts shows that the estimate $e_{2r}(x)
\ll x^{\be-\eps}$ with small $\eps>0$ would be equivalent
to the inequality
\begin{equation} \label{eq:2.1}
e'_{2r}(x)\stackrel{\mathrm{def}}{=}
\theta_{2r}(x)-2C_{2r}x \ll x^{\be-\eps}\log^2 x.
\end{equation}
Note that (\ref{eq:1.17}) and (\ref{eq:2.1}) would imply
holomorphy of the difference
\begin{equation} \label{eq:2.2}
G^0_{2r}(s)=D^0_{2r}(s)-\frac{2C_{2r}}{2s-1}
\quad\mbox{for}\;\;\si={\rm Re}\,s>(\be-\eps)/2.
\end{equation}
Comparison of the series for $D^0_{2r}(s)$ and $D_{2r}(s)$
will show that the difference $D_{2r}(s)-D^0_{2r}(s)$ is
holomorphic for $\si>1/4$; cf.\ Lemma \ref{lem:7.1}
below. Hence an estimate $e_{2r}(x)\ll x^{\be-\eps}$
would imply holomorphy of the difference $G_{2r}(s)$ in
(\ref{eq:1.13}) for $\si>(\be-\eps)/2$, provided
$\be-\eps\ge 1/2$. 

We need precise information on the function $D_0(s)$
derived from (\ref{eq:1.12}).
\begin{lemma} \label{lem:2.1}
For $\si>\frac{1}{2}$ one has
\begin{equation} \label{eq:2.3}
D_0(s) \stackrel{\mathrm{def}}{=}
\sum_{k=1}^\infty\,\frac{\La^2(k)}{k^{2s}}
=\frac{1}{2}\,\frac{d}{ds}
\bigg\{\frac{\ze'(2s)}{\ze(2s)}-\frac{1}{2}\,
\frac{\ze'(4s)}{\ze(4s)}\bigg\}+H_0(s),
\end{equation}
where $H_0(s)$ has an analytic continuation to the
half-plane $\{\si>1/6\}$. This gives a meromorphic
continuation of $D_0(s)$:
\begin{equation} \label{eq:2.4}
D_0(s)=\frac{1}{(2s-1)^2}-\frac{1}{(4s-1)^2}-
\sum_\rho\,\left\{\frac{1}{(2s-\rho)^2}
-\frac{1}{(4s-\rho)^2}\right\}+H_1(s),
\end{equation}
where $H_1(s)$ is holomorphic for $\si>1/6$.
\end{lemma}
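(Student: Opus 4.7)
The plan is to rewrite $D_0(s)$ in terms of $L(u) := \zeta'(u)/\zeta(u)$. Since $\Lambda$ is supported on prime powers with $\Lambda(p^m) = \log p$, the definition of $D_0$ gives $D_0(s) = \sum_p (\log p)^2 \sum_{m \ge 1} p^{-2ms}$. On the other hand, differentiating $L(u) = -\sum_n \Lambda(n)/n^u$ and using $\Lambda(p^m)\log(p^m) = m (\log p)^2$ yields $L'(u) = \sum_p (\log p)^2 \sum_{m \ge 1} m\, p^{-mu}$. A chain-rule computation shows that the combination on the right-hand side of (\ref{eq:2.3}) is identically $L'(2s) - L'(4s)$, so the task reduces to proving that
\[
H_0(s) := D_0(s) - \bigl[L'(2s) - L'(4s)\bigr]
\]
extends holomorphically to $\{\sigma > 1/6\}$.

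I would verify this by expanding the inner sum at each prime $p$ in the variable $y := p^{-2s}$ and comparing coefficients. The prime piece of $D_0$ is $y/(1-y) = \sum_k y^k$; that of $L'(2s)$ is $y/(1-y)^2 = \sum_k k y^k$; and that of $L'(4s)$, after reindexing $k = 2m$, is $y^2/(1-y^2)^2$, contributing $k/2$ at even $k$ and $0$ at odd $k$. The decisive cancellations at $k=1$ and $k=2$ then follow from direct arithmetic, so the per-prime difference is $O(y^3)$. The resulting double series is dominated by $\sum_p (\log p)^2 p^{-6\sigma}$ (times a bounded geometric tail), which converges absolutely for $\sigma > 1/6$; this proves (\ref{eq:2.3}).

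For (\ref{eq:2.4}), I would substitute the Hadamard-style partial-fraction expansion
\[
L(u) = -\frac{1}{u-1} + \sum_\rho\Bigl(\frac{1}{u-\rho} + \frac{1}{\rho}\Bigr) + (\text{digamma term})
\]
into $L'(2s) - L'(4s)$; termwise differentiation produces $L'(u) = (u-1)^{-2} - \sum_\rho (u-\rho)^{-2} + E(u)$, where the zero-sum is now absolutely convergent and $E(u)$ is holomorphic for ${\rm Re}\,u > -2$. Setting $H_1(s) := H_0(s) + E(2s) - E(4s)$ collects the principal terms and the zero sum into the form (\ref{eq:2.4}), and $H_1$ inherits holomorphy on $\{\sigma > 1/6\}$ from both summands. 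The main obstacle is the coefficient bookkeeping in Step one: the threshold $\sigma > 1/6$ appears precisely because the first two Dirichlet coefficients cancel, and verifying this cancellation—rather than anything in the partial-fraction step—is the heart of the argument.
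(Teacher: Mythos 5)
Your proof is correct and follows essentially the same route as the paper: both arguments reduce (\ref{eq:2.3}) to the observation that only the first two prime-power contributions matter (which is exactly where the cancellation producing $L'(2s)-L'(4s)$ and the threshold $\si>1/6$ come from), and then obtain (\ref{eq:2.4}) by differentiating the partial-fraction formula (\ref{eq:2.6}) for $\ze'/\ze$. The only difference is bookkeeping: the paper first writes $\sum_p(\log p)p^{-z}$ as $-\ze'(z)/\ze(z)+\ze'(2z)/\ze(2z)$ plus a harmless term and differentiates, whereas you compare Dirichlet coefficients prime by prime.
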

\begin{proof} Taking $x={\rm Re}\,z>1$ one has
\begin{equation} \label{eq:2.5}
-\frac{\ze'(z)}{\ze(z)} =
\sum\frac{\La(k)}{k^z} =
\sum_p(\log p)\Big(\frac{1}{p^z}+\frac{1}{p^{2z}}
+\frac{1}{p^{3z}}+\cdots\Big).
\end{equation}
It follows that
\begin{align}
\sum(\log p)p^{-z} &=\sum\La(k)k^{-z}-\sum\La(k)k^{-2z}
+g_1(z)\notag \\ &=
-\ze'(z)/\ze(z)+\ze'(2z)/\ze(2z)+g_1(z), \notag
\end{align}
where $g_1(z)$ is holomorphic for $x>1/3$. Hence
by differentiation,
$$\sum (\log^2 p)p^{-z} = 
\frac{d}{dz}\bigg\{\frac{\ze'(z)}{\ze(z)}
-\frac{\ze'(2z)}{\ze(2z)}\bigg\}-g'_1(z),$$
\begin{align}
\sum_k\La^2(k)k^{-z} &= \sum (\log^2 p)p^{-z}
+ \sum (\log^2 p)p^{-2z}+g_2(z) \notag \\ &=
\frac{d}{dz}
\bigg\{\frac{\ze'(z)}{\ze(z)}-\frac{1}{2}\,
\frac{\ze'(2z)}{\ze(2z)}\bigg\}+g_2(z),\notag
\end{align}
where $g_2(z)$ is also holomorphic for $x>1/3$.
Finally use a standard formula for $(\ze'/\ze)(\cdot)$:
\begin{equation} \label{eq:2.6}
\frac{\ze'(z)}{\ze(z)}=b-\frac{1}{z-1}-\frac{1}{2}\,
\frac{\Ga'(1+z/2)}{\Ga(1+z/2)}+\sum_{\rho}\Big(\frac{1}
{z-\rho}+\frac{1}{\rho}\Big),
\end{equation}
cf.\ Titchmarsh \cite{Ti86}, and set $z=2s$.
\end{proof}
We need the representation in Theorem \ref{the:3.1} below.
It involves sufficiently smooth even
sieving functions $E^\la(\nu)=E(\nu/\la)$ depending on a
parameter $\la>0$. The basic functions
$E(\nu)$ have $E(0)=1$ and support $[-1,1]$; we require
that $E$, $E'$ and $E''$ are absolutely continuous with
$E'''$ of bounded variation. An example involving the
Jackson kernel for ${\Bbb R}$ is given by
\begin{align} 
E^\la(\nu) &= E^\la_J(\nu)=
\frac{3}{4\pi}\int_0^\infty\frac{\sin^4(\la t/4)}
{\la^3(t/4)^4}\cos \nu t\,dt
\notag \\ &=\left\{\begin{array}{ll}
1-6(\nu/\la)^2+6(|\nu|/\la)^3 & \mbox{for
$|\nu|\le\la/2$},\\ 2(1-|\nu|/\la)^3 &
\mbox{for
$\la/2\le|\nu|\le\la$},\\ 0 & \mbox{for
$|\nu|\ge\la$.}
\end{array}\right.\notag
\end{align}

An important role is played by a Mellin transform
associated with the Fourier transform 
$\hat E^\la(t)$. For $0<x={\rm Re}\,z<1$
\begin{align} \label{eq:2.7}
M^\la(z) & \stackrel{\mathrm{def}}{=}
\frac{1}{\pi}\int_0^\infty \hat E^\la(t)t^{-z}dt
= \frac{2}{\pi}\int_0^\infty t^{-z}dt\int_0^\la
E^\la(\nu)(\cos t\nu)d\nu
\notag \\
&= \frac{2}{\pi}\int_0^\la E(\nu/\la)d\nu
\int_0^{\infty-}(\cos\nu t)t^{-z}dt\notag\\ &=
\frac{2}{\pi}\Ga(1-z)\sin(\pi z/2)\int_0^\la
E(\nu/\la)\nu^{z-1}d\nu \\ &=
\frac{2\la^z}{\pi}\Ga(1-z)\sin(\pi z/2)\int_0^1
E(\nu)\nu^{z-1}d\nu \notag \\ &=
\frac{2\la^z}{\pi}\Ga(-z-3)\sin(\pi z/2)\int_0^{1+}
\nu^{z+3}dE'''(\nu).\notag
\end{align}
In the special case of $E^\la_J(\cdot)$ one finds
\begin{align} 
M^\la_J(z) &= \frac{3}{4\pi}\int_0^\infty\frac{\sin^4(\la
t/4)} {\la^3(t/4)^4}\,t^{-z}dt\notag \\ &=
\frac{24}{\pi}\la^z(1-2^{-z-1})\Ga(-z-3)\sin(\pi z/2).
\notag
\end{align}

The function $M^\la(z)$ extends to a
meromorphic function for $x>-3$ with simple poles at the
points $z=1,\,3,\,\cdots$. The residue of the pole at 
$z=1$ is $-2(\la/\pi)A^E$ with $A^E=\int_0^1 E(\nu)d\nu$,
and $M^\la(0)=1$. Furthermore, the standard order
estimates
\begin{equation} \label{eq:2.8}
\Ga(z)\ll |y|^{x-1/2}e^{-\pi|y|/2},\quad\sin(\pi z/2)\ll
e^{\pi|y|/2}
\end{equation}
for $|x|\le C$ and $|y|\ge 1$ imply the useful
majorization
\begin{equation} \label{eq:2.9}
M^\la(x+iy)\ll \la^x(|y|+1)^{-x-7/2}
\quad\mbox{for}\;\;-3<x\le
C,\;\; |y|\ge 1.
\end{equation}

\setcounter{equation}{0}    
\section{A basic representation} \label{sec:3}
The following result is related to Theorem $3.1$ in
\cite{Ko07}, but more precise. It will be verified in
Section \ref{sec:6}.
\begin{theorem} \label{the:3.1}
For any $\la>0$ and $s=\si+i\tau$ with
$1/2<\si<1$ there is a meromorphic representation
\begin{equation}  \label{eq:3.1}
D_0(s)+2\sum_{0<2r\le\la}\,E(2r/\la)D_{2r}(s) 
=V^\la(s)+\Si^\la(s)+H^\la(s).
\end{equation}
Here $D_{2r}(s)$ is given by $(\ref{eq:1.12})$, also for
$r=0$; the functions $D_{2r}(s)$ are holomorphic for
$\si>1/2$. The function $D_0(s)$ has a purely
quadratic pole at $s=1/2$; see $(\ref{eq:2.4})$.
On the basis of the PPC one expects that for
$r\ge 1$, the function $D_{2r}(s)$ has a first-order pole
at $s=1/2$ with residue $C_{2r}$. The functions
$V^\la(s)$ and $\Si^\la(s)$ are described in
$(\ref{eq:3.2})$--$(\ref{eq:3.4})$ below. The error
term $H^\la(s)$ is holomorphic for $0<\si<1$. 
\end{theorem}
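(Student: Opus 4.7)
The plan is to recognize the LHS of (\ref{eq:3.1}) as a single double sum over prime powers, resolve the sieving factor by Fourier inversion, reduce to a Mellin--Barnes contour integral involving $\ze'/\ze$, and then deform the contour to expose the pole structure.

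Since $E^\la$ is even with support in $[-\la,\la]$, the LHS equals
\[
S^\la(s)=\sum_{m,n\ge 1}\frac{\La(m)\La(n)\,E^\la(m-n)}{(mn)^s},
\]
the diagonal $m=n$ accounting for $D_0(s)$ (via $E^\la(0)=1$) and the off-diagonal, symmetrized in $m\leftrightarrow n$, yielding $2\sum_{0<2r\le\la}E(2r/\la)D_{2r}(s)$. A sparse residual contribution from odd differences $m-n$ (pairs $(2^a,p^b)$) converges rapidly and is holomorphic in $\{\si>0\}$, so it may be absorbed into $H^\la$. For $\si>1$ the double series converges absolutely.

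Next, invoking $E^\la(\nu)=\frac{1}{\pi}\int_0^\infty \hat E^\la(t)\cos(\nu t)\,dt$, the sum factorizes as
\[
S^\la(s)=\frac{1}{\pi}\int_0^\infty \hat E^\la(t)\,F(s,t)\,F(s,-t)\,dt,\qquad F(s,t):=\sum_{m\ge 1}\frac{\La(m)e^{imt}}{m^s}.
\]
For each $F(s,\pm t)$ I would substitute the Mellin--Barnes representation
\[
e^{\pm imt}=\frac{1}{2\pi i}\int_{(c_0)}\Ga(z)\,e^{\pm i\pi z/2}(mt)^{-z}\,dz\qquad(0<c_0<1),
\]
then interchange sum and integral and use the Dirichlet series for $-\ze'/\ze$ to obtain
\[
F(s,\pm t)=\frac{1}{2\pi i}\int_{(c_0)}\Ga(z)\,e^{\pm i\pi z/2}\,t^{-z}\Big(-\frac{\ze'(s+z)}{\ze(s+z)}\Big)\,dz,
\]
valid when $\si>1-c_0$. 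Substituting, interchanging integrations (justified by the rapid decay of $\Ga$ on vertical lines), and carrying out the $t$-integration, the inner integral becomes $\pi M^\la(z_1+z_2)$ in view of (\ref{eq:2.7}). The change of variables $w=z_1+z_2$, $u=z_1-z_2$ then collapses the problem to a single $w$-contour integral $\frac{1}{2\pi i}\int M^\la(w)\,\Phi(s,w)\,dw$, where $\Phi(s,w)$ is a $u$-integral of a bilinear combination of $\ze'/\ze$ at arguments $s+(w\pm u)/2$.

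To conclude, I would shift the $w$-contour leftward to effect meromorphic continuation. The $w$-values where $s+(w\pm u)/2=1$ contribute the main term $V^\la(s)$; zeros $\rho$ of $\ze$ produce residues that assemble into $\Si^\la(s)$; and the shifted integral, rapidly convergent thanks to the polynomial decay (\ref{eq:2.9}) of $M^\la$, defines $H^\la(s)$ holomorphically in $\{0<\si<1\}$. The main obstacle will be the bookkeeping of the zero contributions: one must show that $\Si^\la$ converges (by combining (\ref{eq:2.9}) with classical density estimates for zeros of $\ze$), that each zero yields precisely the term consistent with the announced structure of (\ref{eq:1.14}), and that the poles of $M^\la$ itself (at $w=1,3,\ldots$) lie outside the region swept by the contour deformation, so no spurious singularities intrude in the strip $0<\si<1$.
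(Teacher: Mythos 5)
Your proposal follows essentially the same route as the paper: identify the left-hand side with the double sum $\sum_{m,n}\La(m)\La(n)E^\la(m-n)(mn)^{-s}$ (absorbing the odd-difference terms, supported on powers of $2$, into $H^\la$), represent the sieving factor by a double Mellin--Barnes integral whose kernel is $M^\la(z_1+z_2)\cos\{\pi(z_1-z_2)/2\}$ as in $(\ref{eq:2.7})$, insert the Dirichlet series for $\ze'/\ze$, and deform contours to collect residues. The paper (Sections \ref{sec:5}--\ref{sec:6}) shifts the two contours successively in the original variables, computing a ``big residue'' of nine terms, whereas you change variables to $w=z_1+z_2$ and shift a single contour; this is an organizational difference only, and your version requires tracking how the singularities of the inner $u$-integral $\Phi(s,w)$ move as $w$ is deformed, which amounts to the same bookkeeping.

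One concrete omission: in enumerating the poles crossed you list only the points where $\ze(s+z_i)$ has its pole (giving $V^\la$) and its zeros $\rho$ (giving $\Si^\la$), but you do not mention the poles of the factors $\Ga(z_1)$, $\Ga(z_2)$ at $z_i=0$ (equivalently $z=s$ or $w=s$ after the paper's shift of variables). These residues produce precisely the terms carrying $\ze'(s)/\ze(s)$, namely the middle term of $V^\la(s)$ in $(\ref{eq:3.2})$ and the first two terms $\{\ze'(s)/\ze(s)\}^2+2(\ze'(s)/\ze(s))\sum_\rho(\cdots)$ of $\Si^\la(s)$ in $(\ref{eq:3.4})$; without them the claimed decomposition $(\ref{eq:3.1})$ cannot be matched. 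You should also make explicit that the convergence of the double sum over pairs $(\rho,\rho')$ via $(\ref{eq:2.9})$ and Lemma \ref{lem:5.1} is carried out under RH (absolute convergence for $1/2<\si<3/2$), with the unconditional case handled only by symmetric partial sums, as the paper is careful to state.
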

The function $V^\la(s)$ is given by the sum
\begin{align} & \label{eq:3.2}
\Ga^2(1-s)M^\la(2-2s)-2\Ga(1-s)\frac{\ze'(s)}{\ze(s)}\,
M^\la(1-s)\sin(\pi s/2) + W^\la(s),\notag \\ &
\quad\mbox{where}\;\;
W^\la(s)=-2\Ga(1-s)\sum_\rho\,\Ga(\rho-s)M^\la(1+\rho-2s)
\sin(\pi\rho/2).
\end{align}
Here $\rho$ runs over the complex zeros of $\ze(s)$. The
combination $V^\la(s)$ is meromorphic for $0<\si<1$,
with poles at $s=1/2$ and the points $s=\rho/2$;
the  apparent poles at the points $s=\rho$ cancel each
other. The simple poles at $s=1/2$ and $s=\rho/2$ have
residues 
\begin{equation} \label{eq:3.3}
A^E\la,\;\;\mbox{and}\;\;-2A^E\la,\;\;
\mbox{respectively,\;with}\;\; A^E=\int_0^1 E(\nu)d\nu.
\end{equation}

The function $\Si^\la(s)$ is given by the sum
\begin{align} & \label{eq:3.4}
\left\{\frac{\ze'(s)}{\ze(s)}\right\}^2 
+2\,\frac{\ze'(s)}{\ze(s)}\,
\sum_{\rho}\,\Ga(\rho-s)M^\la(\rho-s)
\cos\{\pi(\rho-s)/2\} \notag \\ & \quad
+ \sum_{\rho,\,\rho'}\,\Ga(\rho-s)\Ga(\rho'-s) 
M^\la(\rho+\rho'-2s)\cos\{\pi(\rho-\rho')/2\}.
\end{align}
Here $\rho$ and $\rho'$ independently run over the
complex zeros of $\ze(s)$. It is convenient to denote
the sum of the first two terms by $\Si^\la_1(s)$; for
$0<\si\le 1$ it has poles at $s=1$ and at the points
$\rho$. The double series defines a function which we call
$\Si^\la_2(s)$. Under RH the series is absolutely
convergent for $1/2<\si<3/2$. Indeed, setting
$\rho=(1/2)+i\ga$, $\rho'=(1/2)+i\ga'$ and $s=\si+i\tau$,
the inequalities  (\ref{eq:2.8}), (\ref{eq:2.9}) show
that the terms in the double series are
majorized by
\begin{equation} \label{eq:3.5}
C(\la,\tau)(|\ga|+1)^{-\si}(|\ga'|+1)^{-\si}
(|\ga+\ga'|+1)^{-1+2\si-7/2}.
\end{equation} 
Observing that the number of zeros $\rho=(1/2)\pm i\ga$
with $n<\ga\le n+1$ is $\cal{O}(\log n)$, the convergence
now follows from a discrete analog of Lemma \ref{lem:5.1}
below. 

If $\be=\sup {\rm Re}\,\rho>1/2$ there is absolute
convergence for $\be<\si<2-\be$. For $1/2<\si\le\be$
the double sum may be interpreted as a limit of sums over
the zeros $\rho$, $\rho'$ whose imaginary part has
absolute value less than $R$, as $R\to\infty$ through
suitable values; see \cite{Ko07}. By $(\ref{eq:3.1})$ the
apparent poles of $\Si^\la(s)$ at the points $s=\rho$ with
${\rm Re}\,\rho>1/2$ must cancel each other. {\it
Formally}, there is cancellation also at the other points
$\rho$.

\setcounter{equation}{0}    
\section{Metatheorem \ref{the:1.1} for $\be>1/2$ and
Metatheorem \ref{the:1.2}}. \label{sec:4}  
Taking $1/2<\si<1$, formulas
(\ref{eq:3.1})--(\ref{eq:3.5}) show that 
\begin{align} \label{eq:4.1}
\Si^\la_*(s) & \stackrel{\mathrm{def}}{=}\Si^\la(s)-D_0(s)
=2\sum_{0<2r\le\la}\,E(2r/\la)D_{2r}(s) \notag \\ &
-\frac{A^E\la}{s-1/2}+2A^E\la\sum_\rho\,
\frac{1}{s-\rho/2}+H^\la_*(s),
\end{align}
with a `symmetric' sum over $\rho$ and a remainder
$H^\la_*(s)$ that is holomorphic for $0<\si<1$. Recall
from Section \ref{sec:2} that an inequality $e_{2r}(x)\ll
x^{\be-\eps}$ with $\be-\eps\ge 1/2$
would imply holomorphy of the difference
\begin{equation} \label{eq:4.2}
G_{2r}(s)=D_{2r}(s)-\frac{C_{2r}}{s-1/2}
\end{equation}
for $\si>(\be-\eps)/2$. Hence if such holomorphy leads to
a contradiction, so does (\ref{eq:1.9}). This would prove
Metatheorem \ref{the:1.1} for the case $\be>1/2$.

Suppose now that for all $r\le\la/2$ and some $\eps>0$,
the differences $G_{2r}(s)$ 
are holomorphic in the strip $\cal{S}_\eps$ given by
$(\be-\eps)/2<\si<1$. Then by
(\ref{eq:4.1}), the function $\Si^\la_*(s)$ has a
meromorphic continuation [also called $\Si^\la_*(s)$] to
$\cal{S}_\eps$, with poles at $s=1/2$ and some points
$\rho/2$. The pole at $1/2$ will have residue
\begin{equation} \label{eq:4.3}
R(1/2,\la)=2\sum_{0<2r\le\la}\,E(2r/\la)C_{2r}-
\la\int_0^1 E(\nu)d\nu.
\end{equation} 
At this point we use the fact that the prime-pair
constants
$C_{2r}$ have mean value one. Good estimates were
obtained by  Bombieri--Davenport and Montgomery; these
were later improved by Friedlander and Goldston
\cite{FG95} to
\begin{equation} \label{eq:4.4} 
S_m=\sum_{r=1}^m C_{2r}=m-(1/2)\log m+\cal{O}\{\log^{2/3}
(m+1)\}.
\end{equation}
It follows that $R(1/2,\la)$ is $o(\la)$
as $\la\to\infty$, and even $\cal{O}(\log\la)$. Hence by
(\ref{eq:3.4}) the residue at $s=1/2$ of (the meromorphic
continuation of) the double sum $\Si^\la_2(s)$ also is
$o(\la)$. [By (\ref{eq:2.4}) the pole of $D_0(s)$ at
$s=1/2$ is purely quadratic.] The estimate $o(\la)$ is
not surprising if one observes that $\la$ occurs in the
terms of $\Si^\la_2(s)$ only as a factor
$\la^{\rho+\rho'-2s}$; cf.\ (\ref{eq:2.5}). For $\si>1/2$
the exponents have real part $\le 2\be-1$, which is less
than $1$ if $\be<1$.

If the latter kind of heuristic has general validity, the
residues $R(\rho/2,\la)$ of the poles of $\Si^\la_*(s)$ or
$\Si^\la_2(s)$ at the points $\rho/2$ in $\cal{S}_\eps$
must also be $o(\la)$ (at least when $\be<1$ and $\eps$
is small). In view of (\ref{eq:4.1}) this would imply that
many of the functions $D_{2r}(s)$ must become singular at
points $s=\rho/2$ in $\cal{S}_\eps$, which would
contradict our assumption on the differences $G_{2r}(s)$. 

What would be a reasonable hypothesis on the form of the
singularities? Let us start with $0<\la\le 4$ and suppose
that $G_2(s)=D_2(s)-C_2/(s-1/2)$ is holomorphic in
$\cal{S}_\eps$. The residue $R(1/2,\la)$ will equal
$2E(2/\la)C_2-A^E\la$. Thus it changes character as $\la$
passes through the value $2$: it will be linear in $\la$,
of the form $-A^E\la$, for $\la\le 2$, and this linear
term is augmented by the nonlinear term $2E(2/\la)C_2$ as
$\la$ enters the interval $(2,4]$. It is plausible that
the poles of $\Si^\la_*(s)$ at the points $\rho/2$ in
$\cal{S}_\eps$ will be affected in a corresponding
manner. More precisely, the residues $R(\rho/2,\la)$
should change from the linear form $2A^E\la$ to
$2A^E\la-4E(2/\la)C_2$ as $\la$ enters the interval
$(2,4]$. If that is correct, the function
$D_2(s)$ must have first-order poles at the points
$\rho/2$ in $\cal{S}_\eps$ with residue $-2C_2$. The
combination
\begin{equation} \label{eq:4.5}
D_2(s)-\frac{C_2}{s-1/2}+2C_2\sum_{\rho\in S_\eps}\,
\frac{1}{s-\rho/2}
\end{equation}
would be holomorphic in $\cal{S}_\eps$. 

Next taking $4<\la\le 6$ (and if desired, using a
modified function $E(\nu)$ which vanishes on
$[-1/2,1/2]$, say), one may pass to the case
$r=2$, etc. Thus one is led to the postulate
that each function $D_{2r}(s)$ has poles at the points
$\rho/2$ in some strip $\cal{S}_\eps$ with residue
$-2C_{2r}$. If this is correct, the residue of
$\Si^\la_*(s)$ or $\Si^\la_2(s)$ at the poles $\rho/2$ in
$\cal{S}_\eps$ will be  
\begin{equation} \label{eq:4.6}
R(\rho/2,\la)=-4\sum_{0<2r\le\la}\,E(2r/\la)C_{2r}+
2\la\int_0^1 E(\nu)d\nu.
\end{equation} 
Since the constants $C_{2r}$ have average $1$ this would
be consistent with the earlier argument that
$R(\rho/2,\la)$ should be $o(\la)$. 

It follows that Metatheorem \ref{the:1.2} is altogether
plausible, and this suggests Metatheorem \ref{the:1.3}.

\setcounter{equation}{0} 
\section{Integral representations}\label{sec:5}
Setting $z=x+iy$ (and later $w=u+iv$),
we write $L(c)$ for the `vertical line' $\{x=c\}$; the
factor $1/(2\pi i)$ in complex integrals will be omitted.
Thus
$$\int_{L(c)}f(z)dz\stackrel{\mathrm{def}}{=}
\frac{1}{2\pi i}\int_{c-i\infty}^{c+i\infty}f(z)dz.$$
Since it is important for us to have absolutely convergent
integrals, we often have to replace a line $L(c)$ by a
path $L(c,B)=L(c_1,c_2,B)$ with suitable $c_1<c_2$ and
$B>0$:
\begin{equation} \label{eq:5.1}
L(c,B)=\left\{\begin{array}{lllll}
\mbox{$\quad$the half-line} & \mbox{$\{x=c_1,\,-\infty<y\le
-B\}$}\\
\mbox{$+\;$the segment} & \mbox{$\{c_1\le x\le
c_2,\,y=-B\}$}\\
\mbox{$+\;$the segment} & \mbox{$\{x=c_2,\,-B\le y\le B\}$}\\
\mbox{$+\;$the segment} & \mbox{$\{c_2\ge x\ge
c_1,\,y=B\}$}\\
\mbox{$+\;$the half-line} & \mbox{$\{x=c_1,\,B\le
y<\infty\}$;}  
\end{array}\right.
\end{equation} 
cf.\ Figure \ref{fig:1}. 
\begin{figure}[htb]
$$\includegraphics[width=4cm]{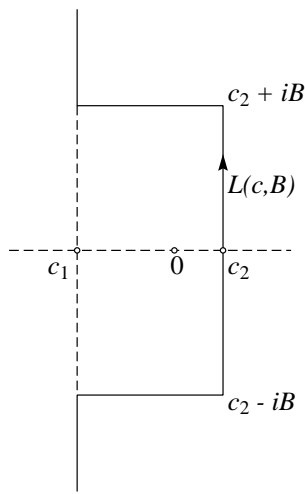}
$$
\caption{The path $L(c_1,c_2,B)$ \label{fig:1}}
\end{figure}
Thus, for example, 
$$\cos\al = \int_{L(c,B)}\Ga(z)\al^{-z}\cos(\pi
z/2)dz \qquad(\al>0),$$
with absolute convergence if $c_1<-1/2$ and $c_2>0$.
Similarly for $\sin\al$. For the combination
$$\cos(\al-\be)t=\cos\al t\cos\be t+\sin\al
t\sin\be t$$  
with $\al,\,\be,\,t>0$, one can now write down an
absolutely convergent repeated integral. 
In \cite{Ko07} it was combined with
(\ref{eq:2.7}) to obtain a repeated complex integral for
the sieving function $E^\la(\al-\be)$ in which $\al>0$
and $\be>0$ occur separately. Taking
$-3<c_1+c'_1<0$, $c_2,\,c'_2>0$, $c_2+c'_2<1$ it was
found that
\begin{align} \label{eq:5.2}
E^\la(\al-\be) & = \int_{L(c,B)}\Ga(z)\al^{-z}dz
\int_{L(c',B)}\Ga(w)\be^{-w}\,\cdot\notag
\\ & \quad\cdot M^\la(z+w)\cos\{\pi(z-w)/2\}\,dw.
\end{align}
We then considered the following integral: 
\begin{align}  \label{eq:5.3}
\quad T^\la(s) &=
\int_{L(c,B)}\Ga(z)\frac{\ze'(z+s)}{\ze(z+s)}\,dz
\int_{L(c,B)}\Ga(w)\frac{\ze'(w+s)}{\ze(w+s)}\,
\cdot \notag \\ & \qquad\cdot 
M^\la(z+w)\cos\{\pi(z-w)/2\}dw,
\end{align}
with suitable paths of integration and for appropriate
$s$; cf.\ Section \ref{sec:6}. Next,
substituting the Dirichlet series for $(\ze'/\ze)(Z)$,
formula (\ref{eq:5.2}) led to the expansion
\begin{align} \label{eq:5.4}
T^\la(s) &=
\sum_{k,l}\La(k)\La(l)k^{-s}l^{-s}E^\la(k-l) \notag
\\ &= D_0(s)+2\sum_{0<d\le\la}\,
\sum_k\,\La(k)\La(k+d)k^{-s}(k+d)^{-s}E^\la(d) \\
&= D_0(s)+2\sum_{0<2r\le\la}\,E(2r/\la)D_{2r}(s)
+H^\la_2(s),\notag
\end{align}
where $H^\la_2(s)$ is holomorphic for $\si>0$. Indeed,
for odd numbers $d$, the product $\La(k)\La(k+d)$ can
be $\ne 0$ only if either $k$ or $k+d$ is of the form
$2^\al$ for some $\al>0$. Thus
$T^\la(s)$ was extended to a holomorphic function on the
half-plane $\{\si>1/2\}$. 

To verify the absolute convergence of the repeated
integral in (\ref{eq:5.2}) we substituted $z=x+iy$,
$w=u+iv$, and used the inequalities (\ref{eq:2.8}),
(\ref{eq:2.9}) together with a simple lemma:
\begin{lemma} \label{lem:5.1}
For real constants $a,\,b,\,c$, the function
$$\phi(y,v)=(|y|+1)^{-a}(|v|+1)^{-b}(|y+v|+1)^{-c}$$
is integrable over ${\Bbb R}^2$ if and only if $a+b>1$,  
$a+c>1$, $b+c>1$ and $a+b+c>2$. 
\end{lemma}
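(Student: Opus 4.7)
My plan is to prove both directions by combining explicit test regions (for necessity) with a dyadic decomposition of the plane (for sufficiency).

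For necessity, each of the four hypotheses is forced by a single family of divergent regions. Summing $\phi$ over the dyadic shells $S_k=\{2^k\le|y|,|v|\le 2^{k+1}\}$ gives a contribution of order $2^{k(2-a-b-c)}$ per shell, so the total diverges unless $a+b+c>2$. For $a+b>1$ I would restrict to the anti-diagonal slab $\{y\ge 1,\,|y+v|\le 1\}$: there $|v|\asymp y$ and $\phi\asymp y^{-(a+b)}$, so the $y$-integral diverges when $a+b\le 1$. The conditions $a+c>1$ and $b+c>1$ arise symmetrically from the axial slabs $\{y\ge 1,\,|v|\le 1\}$ and $\{v\ge 1,\,|y|\le 1\}$, on which $|y+v|\asymp y$ or $|y+v|\asymp v$ respectively, so that $\phi\asymp y^{-(a+c)}$ or $v^{-(b+c)}$ and the axial integral diverges when the corresponding sum is $\le 1$.

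For sufficiency, decompose ${\Bbb R}^2$ into its four sign-quadrants, and within each into the dyadic rectangles $R_{i,j}=\{2^{i-1}\le|y|+1\le 2^i,\,2^{j-1}\le|v|+1\le 2^j\}$ for $i,j\ge 1$; the central unit square gives a bounded contribution. On $R_{i,j}$ the first two factors are $\asymp 2^{-ai-bj}$, so it remains to estimate $I_{i,j}=\int_{R_{i,j}}(|y+v|+1)^{-c}\,dy\,dv$. In the same-sign quadrants $|y+v|\asymp\max(|y|,|v|)$, giving $I_{i,j}\asymp 2^{i+j-c\max(i,j)}$ directly. In the opposite-sign quadrants the same bound holds when $i\ne j$, but on the anti-diagonal $i=j$ the substitution $(y,v)\mapsto(y+v,\,y-v)$ shows that $I_{i,i}$ is of order $2^i$ if $c>1$, of order $i\cdot 2^i$ if $c=1$, and of order $2^{(2-c)i}$ if $c<1$. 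Summing $2^{-ai-bj}I_{i,j}$ over $i,j\ge 1$ and splitting into the regimes $i<j$, $i>j$, $i=j$ reduces everything to elementary geometric sums whose joint convergence is governed exactly by the four hypotheses: the off-diagonal regimes by $a+c>1$, $b+c>1$, together with $a+b+c>2$ (the last entering when the dominant exponent is below $1$), and the opposite-sign anti-diagonal by $a+b>1$ (when $c\ge 1$) or again $a+b+c>2$ (when $c<1$).

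The main obstacle is the anti-diagonal $y\approx -v$ in the opposite-sign quadrants, since this is the only place where the condition $a+b>1$ genuinely enters and the `same-sign' heuristic that $|y+v|$ is comparable to $\max(|y|,|v|)$ fails there. Once the sign-quadrant split is in place and the substitution $u=y+v$ is applied on the anti-diagonal slices, the remainder of the argument is routine summation of geometric progressions.
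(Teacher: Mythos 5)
The paper never proves this lemma --- it is invoked in Section \ref{sec:5} only as ``a simple lemma'' --- so there is no argument of the author's to compare yours against; your dyadic decomposition is the natural route and is essentially correct. Two points of precision. First, in the necessity step for $a+b+c>2$, the integral of $\phi$ over the \emph{full} shell $S_k$ need not be of order $2^{k(2-a-b-c)}$ (for $c>0$ the anti-diagonal portion can contribute much more); you should restrict to a same-sign quadrant of the shell, where $|y+v|\asymp 2^k$ throughout, which gives the lower bound $\gtrsim 2^{k(2-a-b-c)}$ that necessity actually requires. Second, and more substantively, in the sufficiency step your claim that in the opposite-sign quadrants the estimate $I_{i,j}\asymp 2^{i+j-c\max(i,j)}$ ``holds when $i\ne j$'' fails for $|i-j|=1$: with $|y|+1\in[2^{i-1},2^i]$ and $|v|+1\in[2^i,2^{i+1}]$ and opposite signs, $y+v$ can vanish, so for $c>0$ the true $I_{i,j}$ is much larger than the claimed bound. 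The repair is immediate and does not disturb the conclusion: apply your substitution $u=y+v$ to all opposite-sign rectangles with $|i-j|\le 1$, not just $i=j$; these contribute $\asymp 2^{-(a+b)i}I_{i,i}$ and are governed by exactly the same conditions as your diagonal case ($a+b>1$ when $c\ge 1$, $a+b+c>2$ when $c<1$). With that adjustment the case analysis closes, all four hypotheses are used, and they suffice.
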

For the convergence of the repeated integral in
(\ref{eq:5.3}) we also used the fact that the quotient
$(\ze'/\ze)(Z)$ grows at most logarithmically in $Y$ for
$X\ge 1$, and for $X\ne 1/2$ under RH; cf.\ (\ref{eq:2.6})
and Titchmarsh \cite{Ti86}. The holomorphy of the integral
for $T^\la(s)$ then followed from locally uniform
convergence in $s$. 
 
The following sections serve as preparations for
the case $\be=1/2$ of Theorem \ref{the:1.1}, so that RH
is satisfied.

\setcounter{equation}{0} 
\section{Derivation of Theorem \ref{the:3.1} under RH}
\label{sec:6}
Changing variables in (\ref{eq:5.3}) one obtains
\begin{align} \label{eq:6.1}
T^\la(s) & = \int_{L(c,B)}\Ga(z-s)
\frac{\ze'(z)}{\ze(z)}\,dz
\int_{L(c,B)}\Ga(w-s)\frac{\ze'(w)}{\ze(w)}\,
\cdot \notag \\ & \quad\; \cdot
\,M^\la(z+w-2s)\cos\{\pi(z-w)/2\}dw,
\end{align}
with new paths $L(c,B)$ and the point $s$ to the left of
them. Using Cauchy's theorem and assuming RH, one may
take $c_1=(1/2)+\eta$, $c_2=1+\eta$ with small $\eta>0$
and $(1/2)+\eta<\si<1+\eta$, $|\tau|<B$. [Without RH one
could take $c_1=1$, $c_2=3/2$ and $1<\si<3/2$.] The
absolute  convergence of the repeated integral follows
from Lemma \ref{lem:5.1}. 

We now move the paths of integration across the poles of
the integrand, the points where $z$ or $w$ is equal to
$1$, $s$ or $\rho$. For the transition one may use
quasi-rectangular contours $W_R$, see Figure
\ref{fig:2}, where $R$ runs through a sequence
$R_n\in(n,n+1)$ such that the horizontal segments at
level $\pm R$ are as far from zeros of the zeta function
as possible. 
\begin{figure}[htb]
$$\includegraphics[width=4cm]{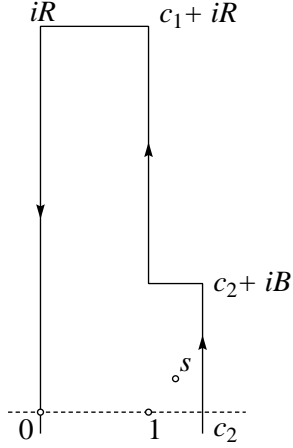}
$$
\caption{Upper half of $W_R$ \label{fig:2}}
\end{figure}
Moving the $w$-path to a line $L(d_1)$ with $d_1\approx
0$, one gets
\begin{equation} \label{eq:6.2}
T^\la(s) =
\int_{L(c,B)}\cdots\,dz\int_{L(d_1)}\cdots\,dw
+U^\la(s)=T^\la_*(s)+U^\la(s),
\end{equation}
say, where by the residue theorem
\begin{equation} \label{eq:6.3}
U^\la(s) = \int_{L(c,B)}
\Ga(z-s)\frac{\ze'(z)}{\ze(z)}\,J(z,s)dz,
\end{equation}
with
\begin{align} \label{eq:6.4}
J(z,s) &=
-\Ga(1-s)M^\la(z+1-2s)\cos\{\pi(z-1)/2\}\notag \\ &
\quad +
\frac{\ze'(s)}{\ze(s)}\,M^\la(z-s)\cos\{\pi(z-s)/2\} \\ &
\quad +\sum_\rho\,\Ga(\rho-s)M^\la(z+\rho-2s)
\cos\{\pi(z-\rho)/2\}.\notag
\end{align}
Observe that for given $s$ with $1/2<\si<1$,
$|\tau|<B$  and small $\eta$, the function $J(z,s)$ is
holomorphic in $z$ on and between the paths $L(c,B)$ and
$L(d_1)$. Defining $J(z,s)$ for $z\in L(c,B)$ by
continuity at  the points $s=1$ and $s=\rho$, it becomes
holomorphic in $s$ for $c_2/2<\si<c_2$. Indeed,
the poles at the point $s=1$ cancel each other, as do the
poles at the points $s=\rho$.

What conditions do $c,\,d$ and $s$ have to
satisfy? The double integral for $T^\la_*(s)$
must be absolutely convergent, which requires
$\si>(c_1+d_1)/2$; cf.\ Lemma \ref{lem:5.1}.
Also, one should not cross a pole of $M^\la(\cdot)$ 
during the shifting operation. Thus $x+u-2\si$ should
remain less than $1$. Taking $\eta$ small, this allows
values of $\si$ close to $1/2$. Since we ultimately want
to consider values of $\si$ around $1/4$, we take $d_1<0$.
Varying $c$ and $d$, the double integral will define
$T^\la_*(s)$ as a holomorphic function for $0<\si<1$
and $|\tau|<B$. 

We next consider the single integral for
$U^\la(s)$. Moving the path $L(c,B)$ across the
points $z=1$, $z=s$ and $z=\rho$ to the line $L(d_1)$, we
obtain the decomposition 
\begin{align} \label{eq:6.5}
U^\la(s) &=
\int_{L(d_1)}\Ga(z-s)\frac{\ze'(z)}{\ze(z)}\,J(z,s)dz 
\notag \\ &
+\bigg\{-\Ga(1-s)J(1,s)+\frac{\ze'(s)}{\ze(s)}\,J(s,s) +
\sum_{\rho'}\,\Ga(\rho'-s)J(\rho',s)\bigg\}.
\end{align}
Working out the residue with the aid of (\ref{eq:6.4})
one obtains nine terms. Five of these combine into the
function $V^\la(s)$ of (\ref{eq:3.2}). Using 
the pole-type behavior of $M^\la(Z)$ at the point
$Z=1$ (Section \ref{sec:2}), the first term in
$V^\la(s)$ provides an important pole at the point
$s=1/2$:
\begin{equation} \label{eq:6.6}
\Ga^2(1-s)M^\la(2-2s)=\frac{A^E\la}{s-1/2}+H^\la_3(s),
\end{equation}
where $H^\la_3(s)$ is holomorphic for $0<\si<1$. The
other terms in $V^\la(s)$ only present simple poles at the
points $s=\rho/2$. A short computation shows that the
residues at those poles are all equal to $-2A^E\la$. The
four remaining terms coming from the big residue
$\{\cdots\}$ provide the function
$\Si^\la(s)$ of (\ref{eq:3.4}). 

It remains to consider the single integral along $L(d_1)$
in (\ref{eq:6.5}), let us call it $U^\la_*(s)$, which we
want to define a holomorphic function in a relatively
wide strip. For that we need absolute convergence of the
`double sum', formed by the $y$-integral along $L(d_1)$
and the sum over $\rho$ in (\ref{eq:6.4}). With
$s=\si+i\tau$ and ${\rm Im\,}\rho=\ga$, the standard
estimates give the following majorant for the integrand:
\begin{align} &
C(\tau)(|y|+1)^{d_1-\si-1/2}\log(|y|+1)\,\cdot 
\notag \\ & \cdot
\sum_\ga\,(|\ga|+1)^{-\si}\la^{d_1+1-2\si}
(|y+\ga|+1)^{-d_1+2\si-4}.\notag
\end{align}
Taking $d_1=-1/2$, the analog of Lemma
\ref{lem:5.1} for the integral of a sum proves the
absolute convergence and holomorphy of the integral when
$0<\si<1$. 

Combination of the above results with (\ref{eq:5.4})
will verify Theorem \ref{the:3.1} under RH.

\setcounter{equation}{0} 
\section{The differences $D_{2r}(s)-D^0_{2r}(s)$ and
$\psi_{2r}(x)-\theta_{2r}(x)$} \label{sec:7}
To treat the case $\be=1/2$ of Theorem
\ref{the:1.1} one has to work with the
function $D^0_{2r}(s)$ of (\ref{eq:1.17}) instead of
$D_{2r}(s)$. In the following $p$ and $q$ are primes that
run over all pairs of the type indicated with the sums. By
the definition of $\La(\cdot)$,
\begin{align} \label{eq:7.1}
D_{2r}(s) &= \sum\frac{\La(n)\La(n+2r)}{n^s(n+2r)^s}
\notag \\ &= \sum_{\substack{p,\,q\\ q-p=2r}}
\frac{\log p\log q}{p^sq^s} +
\sum_{\substack{p,\,q\\ q^2-p=\pm 2r}}
\frac{\log p\log q}{p^sq^{2s}} + H_{1,r}(s),
\end{align}
where $H_{1,r}(s)$ is holomorphic for $\si>1/6$.
The final sum comes from the cases $n=p$, $n+2r=q^2$ and
$n=q^2$, $n+2r=p$. There are only finitely many $n$ of
the form $p^2$ such that $n+2r=q^2$. The function
$g_{1,r}(s)$ includes these and the cases where $n$ or
$n+2r$ is a prime power with exponent $\ge 3$. Continuing
one obtains a sum over the prime pairs $(p,\,p+2r)$ and a
sum over prime pairs $(q,\,q^2\pm 2r)$: 
\begin{lemma} \label{lem:7.1}
One has
\begin{align} \label{eq:7.2}
D_{2r}(s) &= \sum_{p;\,p+2r\,{\rm
prime}}\frac{\log^2 p}{p^{2s}} + 
2\sum_{q;\,q^2\pm 2r\,{\rm prime}}\frac{\log^2
q}{q^{4s}}+H_{2,r}(s) \notag \\ &= D^0_{2r}(s) + 2
D^*_{2r}(s)+H_{2,r}(s),\quad\mbox{say},
\end{align}
where $D^*_{2r}(s)$ and $H_{2,r}(s)$ are holomorphic
for $\si>1/4$, and $\si>1/6$, respectively.
\end{lemma}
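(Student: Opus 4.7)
The plan is to expand $\Lambda(n)\Lambda(n+2r)$ using $\Lambda(p^a)=\log p$ for $a\ge 1$ and zero otherwise, so that
\begin{equation*}
D_{2r}(s)=\sum_{a,b\ge 1}\;\sum_{\substack{p,\,q\;\text{prime}\\ q^b-p^a=2r}}\frac{\log p\,\log q}{p^{as}q^{bs}}.
\end{equation*}
I would identify $(a,b)=(1,1)$ as the source of $D^0_{2r}(s)$ and $(a,b)\in\{(1,2),(2,1)\}$ as yielding $2D^*_{2r}(s)$, and show that the remaining pairs can be absorbed into an $H_{2,r}(s)$ holomorphic on $\sigma>1/6$.

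For $(1,1)$, setting $q=p+2r$ and expanding $\log(p+2r)=\log p+\mathcal{O}(1/p)$ together with $(p+2r)^{-s}=p^{-s}+\mathcal{O}(p^{-\sigma-1})$ gives
\begin{equation*}
\frac{\log p\,\log(p+2r)}{p^s(p+2r)^s}=\frac{\log^2 p}{p^{2s}}+\mathcal{O}\!\left(\frac{\log^2 p}{p^{2\sigma+1}}\right),
\end{equation*}
with error absolutely summable on $\sigma>0$. For $(1,2)$, one has $p=q^2-2r$, hence $\log p=2\log q+\mathcal{O}(q^{-2})$ and $p^{-s}=q^{-2s}+\mathcal{O}(q^{-2\sigma-2})$; the summand equals $2\log^2q/q^{4s}+\mathcal{O}(\log^2q/q^{4\sigma+2})$, and the mirror case $(2,1)$ with $p=q^2+2r$ gives the analogous contribution. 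Collecting the two sign choices yields precisely $2D^*_{2r}(s)$ plus an error holomorphic on $\sigma>-1/4$, and in particular on $\sigma>1/6$.

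The residual pairs split as follows. The case $a=b=2$ reduces to $(q-p)(q+p)=2r$, which has only finitely many positive-integer solutions and so contributes an entire function. For $a\ge 3$ (any $b\ge 1$), at most one pair $(b,q)$ solves $q^b=p^a+2r$ for each $(p,a)$, since $b$ and the prime $q$ are determined by the factorization of $p^a+2r$; the denominator is $\approx p^{2as}$ and a direct bound yields
\begin{equation*}
\sum_{a\ge 3}\,a\sum_p\frac{\log^2 p}{p^{2a\sigma}},
\end{equation*}
convergent on $\sigma>1/6$. The delicate subcases are $(1,b)$ and $(2,b)$ for $b\ge 3$, where the summand is only of size $\log^2p/p^{2\sigma}$ and $\log^2p/p^{4\sigma}$ respectively; the saving comes from sparsity, since the number of integers $p\le X$ with $p+2r$ (resp.\ $p^2\pm 2r$) a perfect $b$-th power is at most $X^{1/b}$ (resp.\ $X^{2/b}$). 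Summing over $3\le b\ll\log X$ gives $\mathcal{O}(X^{1/3})$ and $\mathcal{O}(X^{2/3})$ admissible $p$, and partial summation bounds the two contributions by $\log^2X/X^{2\sigma-1/3}$ and $\log^2X/X^{4\sigma-2/3}$, both bounded iff $\sigma>1/6$.

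The main obstacle is precisely these sparse $(1,b)$ and $(2,b)$ subcases at $b=3$: naive absolute bounds on the summand would give only $\sigma>1/2$ or $\sigma>1/4$, and one really needs the gain from counting $b$-th powers. The threshold $\sigma>1/6$ is sharp in this argument, being forced by the density $X^{1/3}$ of cubes below $X$; this is the only step requiring more than routine manipulation. Assembling the pieces yields the stated decomposition with the claimed regions of holomorphy for $D^*_{2r}(s)$ and $H_{2,r}(s)$.
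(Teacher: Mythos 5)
Your decomposition by prime-power exponents $(a,b)$, with $(1,1)$ giving $D^0_{2r}$, $(1,2)$ and $(2,1)$ giving $2D^*_{2r}$, $(2,2)$ finite, and exponents $\ge 3$ absorbed into $H_{2,r}$ via the sparsity of $b$-th powers, is exactly the paper's argument (stated there only in sketch form around (7.1)). Your proposal is correct and in fact supplies the convergence details — including the identification of the cube case as the source of the $\si>1/6$ threshold — that the paper leaves implicit.
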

We also consider corresponding partial sums,
$\theta_{2r}(x)$ from (\ref{eq:1.16}) and
\begin{equation} \label{eq:7.3}
\theta^*_{2r}(x)\stackrel{\mathrm{def}}{=}\sum_{q\le
x;\,q^2\pm 2r\,{\rm prime}}\,\log^2 q.
\end{equation}
A sieving argument would show that
$\theta^*_{2r}(x)=\cal{O}(x)$; cf.\ \cite{BH65},
\cite{HR74}, \cite{HiRi05}. 
\begin{lemma} \label{lem:7.2}
By $(\ref{eq:1.10})$ and $(\ref{eq:7.1})-(\ref{eq:7.3})$,
\begin{align} \label{eq:7.4}
\psi_{2r}(x) &= \sum_{n\le x}\,\La(n)\La(n+2r) \notag \\ 
&= \theta_{2r}(x) + 2\theta^*_{2r}(x^{1/2})
+\cal{O}(x^{(1/3)}\log^2 x).
\end{align}
\end{lemma}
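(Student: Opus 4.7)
The plan is to unfold the convolution $\Lambda(n)\Lambda(n+2r)$ according to the prime-power structure of the pair $(n,n+2r)$, identify the two leading contributions as $\theta_{2r}(x)$ and $2\theta^*_{2r}(x^{1/2})$, and show that everything else is absorbed by $\mathcal{O}(x^{1/3}\log^2 x)$. Concretely I would split the range $n \le x$ into four classes: \emph{(a)} both $n = p$ and $n+2r = q$ are primes; \emph{(b)} exactly one of $n$, $n+2r$ is a prime and the other a prime square; \emph{(c)} both $n$ and $n+2r$ are prime squares; \emph{(d)} at least one of $n$, $n+2r$ is a prime power with exponent $\ge 3$. This mirrors exactly the classification used in the derivation of Lemma \ref{lem:7.1}, except now at the level of partial sums rather than Dirichlet series.

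First I would handle case (a). The contribution is $\sum_{p \le x,\, p+2r\,{\rm prime}} \log p \log(p+2r)$. The expansion $\log(p+2r) = \log p + \mathcal{O}(r/p)$, together with Mertens' bound $\sum_{p \le x} \log p/p = \log x + \mathcal{O}(1)$, would convert this into $\theta_{2r}(x) + \mathcal{O}(\log x)$ for fixed $r$. For case (b), the two sub-cases $n = p$, $n+2r = q^2$ (so $p = q^2-2r$ prime, $q \le (x+2r)^{1/2}$) and $n = q^2$, $n+2r = p$ (so $p = q^2+2r$ prime, $q \le x^{1/2}$) each contribute $\log p \log q$; applying the approximation $\log(q^2 \pm 2r) = 2\log q + \mathcal{O}(1/q^2)$ matches each sub-case to one half of $2\theta^*_{2r}(x^{1/2})$, where the $\pm$ in the definition of $\theta^*_{2r}$ is read as summing the two independent conditions $q^2+2r$ prime and $q^2-2r$ prime (consistently with the decomposition in Lemma \ref{lem:7.1}). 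Small boundary discrepancies between $q \le x^{1/2}$ and $q \le (x+2r)^{1/2}$ cost at most $\mathcal{O}(\log x)$.

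Finally I would bound cases (c) and (d). In (c), the equation $(q-p)(q+p) = 2r$ has only finitely many solutions for fixed $r$, contributing $\mathcal{O}(1)$. In (d), the number of integers $n \le x$ of the form $p^k$ with $k \ge 3$ is at most $\sum_{k \ge 3} \pi(x^{1/k}) = \mathcal{O}(x^{1/3})$; each such $n$ yields $\Lambda(n)\Lambda(n+2r) \le \log^2 x$, and the symmetric case in which $n+2r$ (rather than $n$) has exponent $\ge 3$ is handled identically. The combined contribution is $\mathcal{O}(x^{1/3}\log^2 x)$, which dominates all the smaller errors from cases (a), (b), (c). Adding the pieces then gives the claimed identity. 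I do not anticipate any substantive obstacle: the argument is essentially a careful bookkeeping of $\Lambda$-supports combined with the elementary logarithmic expansions above, and the exponent $1/3$ is forced by the cube-power count.
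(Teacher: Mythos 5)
Your proof is correct and is essentially the argument the paper intends: Lemma \ref{lem:7.2} is stated without explicit proof beyond the pointer to $(\ref{eq:1.10})$ and $(\ref{eq:7.1})$--$(\ref{eq:7.3})$, and your classification of $n$ by the prime-power type of the pair $(n,n+2r)$ --- with the exponent-$\ge 3$ prime powers supplying the dominant $\mathcal{O}(x^{1/3}\log^2 x)$ term --- is precisely the partial-sum version of the decomposition underlying $(\ref{eq:7.1})$--$(\ref{eq:7.2})$. (One trivial slip: the boundary discrepancy in your case (b) costs $\mathcal{O}(\log^2 x)$ rather than $\mathcal{O}(\log x)$, which is still harmlessly absorbed.)
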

We can now formulate a refinement of Theorem
\ref{the:3.1}. In view of (\ref{eq:7.2}) the discussion in
Section \ref{sec:6} shows the following.
\begin{theorem} \label{the:7.3}
For $\la>0$ and $1/2<\si<1$, one has
\begin{align} \label{eq:7.5}
T^\la(s) &= D_0(s)+2\sum_{0<2r\le\la}\,E(2r/\la)
\{D^0_{2r}(s)+2D^*_{2r}(s)\} + H^\la_4(s) \notag \\
&= \frac{A^E\la}{s-1/2}-2A^E\la\sum_\rho\,
\frac{1}{s-\rho/2}
+\Si^\la(s)+H^\la_5(s),
\end{align}
where the error terms $H^\la_j(s)$ are holomorphic for
$1/6<\si<1$. 
\end{theorem}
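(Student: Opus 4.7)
The plan is to derive (\ref{eq:7.5}) in two stages. The first equality is obtained by inserting Lemma \ref{lem:7.1} into the expansion (\ref{eq:5.4}) of $T^\la(s)$; the second equality is then obtained by re-running the residue analysis of Section \ref{sec:6}, which already identified the pole terms of $T^\la(s)$ as a meromorphic function, regardless of how the Dirichlet series on the right is rewritten.

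For the first equality I would start from
$$T^\la(s) = D_0(s) + 2\sum_{0<2r\le\la} E(2r/\la)\,D_{2r}(s) + H^\la_2(s),$$
in which $H^\la_2$ is holomorphic for $\si>0$, and substitute the decomposition $D_{2r}(s) = D^0_{2r}(s) + 2 D^*_{2r}(s) + H_{2,r}(s)$ from (\ref{eq:7.2}). Since $\{2r:0<2r\le\la\}$ is a finite index set, the pooled remainder
$$H^\la_4(s) := H^\la_2(s) + 2\sum_{0<2r\le\la} E(2r/\la)\,H_{2,r}(s)$$
is a finite linear combination of functions each holomorphic on $\{\si>1/6\}$, hence is itself holomorphic there. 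This yields the first equality with the asserted domain.

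For the second equality I would re-use the derivation of Section \ref{sec:6} essentially verbatim, since that argument operates on the integral representation (\ref{eq:6.1}) of $T^\la(s)$. Shifting the inner $w$-path across the poles at $w=1$, $w=s$, $w=\rho$ gives $T^\la = T^\la_* + U^\la$ with $T^\la_*$ holomorphic on $0<\si<1$, and the residue unfolding (\ref{eq:6.5})--(\ref{eq:6.6}) writes $U^\la = V^\la + \Si^\la$. From (\ref{eq:6.6}) the leading summand of $V^\la(s)$ contributes the displayed pole $A^E\la/(s-1/2)$, while the remaining summands of $V^\la(s)$ contribute simple poles only at the points $s=\rho/2$, each with common residue $-2A^E\la$, as recorded in Section \ref{sec:3}. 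Peeling off these explicit pole terms and collecting everything else (the holomorphic shards of $V^\la(s)$, the function $T^\la_*(s)$, and the single integral along $L(d_1)$ in (\ref{eq:6.5})) into one function $H^\la_5(s)$ produces the second equality; the collection is holomorphic on $0<\si<1$, and so in particular on $1/6<\si<1$.

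The main obstacle I expect is the rigorous handling of the symmetric $\rho$-series. The residues at the points $s=\rho/2$ arise from the $\rho$-sum inside $V^\la(s)$, which is a priori only conditionally convergent and must be interpreted, as in Section \ref{sec:3}, as a limit of partial sums over $|\mathrm{Im}\,\rho|\le R_n$ for a suitable sequence $R_n$. One has to check that subtracting the individual pole $-2A^E\la/(s-\rho/2)$ from the corresponding $\rho$-term of $V^\la$ yields partial sums that converge absolutely, locally uniformly in $s$, to a function holomorphic in the claimed strip. The lower bound $\si>1/6$ in the domain is forced by the prime-power corrections $H_{2,r}(s)$ in Lemma \ref{lem:7.1} (coming from cubes and higher powers of primes) and is not improvable without additional input.
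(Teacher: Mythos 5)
Your proposal is correct and follows the same route as the paper, which justifies Theorem \ref{the:7.3} precisely by inserting the decomposition $D_{2r}=D^0_{2r}+2D^*_{2r}+H_{2,r}$ of Lemma \ref{lem:7.1} into the expansion (\ref{eq:5.4}) and then invoking the residue analysis of Section \ref{sec:6}, in which $V^\la(s)$ is replaced by its pole terms $A^E\la/(s-1/2)-2A^E\la\sum_\rho 1/(s-\rho/2)$ plus a holomorphic remainder. Your remarks on pooling the finitely many $H_{2,r}(s)$ (which force the $\si>1/6$ restriction) and on interpreting the $\rho$-sum as a symmetric limit match the paper's treatment.
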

We wish to use (\ref{eq:7.5}) for the study of the
prime-pair functions $D^0_{2r}(s)$ when $\be=1/2$,
and for that we need information on the functions
$D^*_{2r}(s)$ near the line $L(1/4)=\{\si=1/4\}$. This
requires  the consideration of prime pairs $(p,\,p^2\pm
2r)$.

\setcounter{equation}{0} 
\section{Prime pairs $(p,\,p^2\pm 2r)$}
\label{sec:8}
Let $f(p)=p^2-2r$ with $r\in{\Bbb Z}\setminus 0$, and
define
\begin{equation} \label{eq:8.1}
\pi_f(x)=\#\{p\le x:\,f(p)\;\mbox{prime}\}.
\end{equation}
Does $\pi_f(x)$ tend to infinity as $x\to \infty$? Not if
$f(n)$ can be factored, nor if $r\equiv 2\;({\rm
mod\;}3)$, for then $p^2-2r$ is  divisible by $3$ when
$p\ne 3$. However, if $f(n)$ is irreducible and for every
prime $p$, there is a positive integer $n$ such that $p$
does not divide $nf(n)$, one would expect that
$\pi_f(x)\to\infty$ as $x\to \infty$. This is a very
special case of what is usually called
Schinzel's conjecture \cite{SiSc58}. More generally, let
$f(n)$ be any polynomial of degree $d$ with
integer coefficients. For irreducible $f(n)$ we set
\begin{equation} \label{eq:8.2}
N_f(p)=\#\{n,\,1\le n\le p:\,nf(n)\equiv 0\;
({\rm mod}\,p)\},
\end{equation}
and define
\begin{equation} \label{eq:8.3}
C(f)=\prod_p\,\left(1-\frac{1}{p}\right)^{-2}
\left(1-\frac{N_f(p)}{p}\right).
\end{equation} 
The product will converge, but $C(f)$ may be zero; if
$f(n)$ can be factored, we define $C(f)=0$. Then a special
case of the general conjecture of Bateman and Horn
\cite{BH62}, \cite{BH65} asserts the following:
\begin{conjecture} \label{con:8.1}
As $x\to \infty$, one has
\begin{equation} \label{eq:8.4}
\pi_f(x)\sim\frac{C(f)}{d}\,{\rm
li}_2(x)=\frac{C(f)}{d}\,\int_2^x\frac{dt}{\log^2 t}.
\end{equation}
\end{conjecture}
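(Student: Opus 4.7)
The plan is upfront: Conjecture \ref{con:8.1} is a special case of the Bateman--Horn conjecture, which is open even in elementary instances (the infinitude of primes of the form $n^2+1$ being a notorious example). So rather than propose a rigorous proof I would aim at the heuristic derivation that actually yields the constant $C(f)/d$, together with an honest account of where any attempt to make the heuristic into a theorem gets stuck. The three ingredients are a Cram\'er-type probabilistic main term from the PNT, a local-density correction at every prime, and a comparison against what sieve theory provides.

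For the main term I would argue as follows. By the prime number theorem, an integer of size $y$ is prime with heuristic probability $1/\log y$. For $p\le x$ the value $f(p)$ has size $\asymp p^d$, so its ``probability'' of being prime is $\sim 1/(d\log p)$. Pretending that primality of $p$ and of $f(p)$ were independent and using partial summation,
\[
\pi_f(x)\;\approx\;\sum_{p\le x}\frac{1}{d\log p}\;\sim\;\frac{1}{d}\,{\rm li}_2(x).
\]
This already has the correct order of magnitude but the wrong leading constant. To repair the constant I would compute, for each prime $q$, the actual local density of $n$ with $q\nmid n\,f(n)$ and compare it with the naive model. The number of $n\in\{1,\dots,q\}$ with $q\mid n\,f(n)$ is exactly $N_f(q)$ by definition, so the true local ``survival'' proportion is $1-N_f(q)/q$, whereas the independence heuristic predicts $(1-1/q)^2$. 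Forming the ratio and multiplying over all primes yields exactly
\[
\prod_q\Big(1-\frac{1}{q}\Big)^{-2}\Big(1-\frac{N_f(q)}{q}\Big)=C(f),
\]
which rescales the main term to $\pi_f(x)\sim(C(f)/d)\,{\rm li}_2(x)$ as claimed. Convergence of the product requires $N_f(q)=1+O_f(q^{-1/2})$ on average, which follows from a Chebotarev-type argument for the splitting of $f$ modulo $q$; irreducibility of $f$ is what guarantees $C(f)>0$.

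The main obstacle is the passage from heuristic to theorem. Selberg's upper-bound sieve already gives the correct order of magnitude
\[
\pi_f(x)\ll \frac{x}{\log^2 x},
\]
and in the linear case $d=1$ a Chen-type weighted sieve combined with Bombieri--Vinogradov produces matching lower bounds, but only with one of the two primality conditions relaxed to ``prime or semi-prime''. The fundamental obstruction is the parity problem of Selberg, which forbids any purely sieve-theoretic argument from isolating the main constant; and for a polynomial in one variable of degree $d\ge 2$ the problem is compounded by the sparsity of the sequence $\{f(p)\}$, for which not even infinitude of prime values is known. Consequently the honest ``proof proposal'' terminates at the heuristic, and I would simply invoke Conjecture \ref{con:8.1} as an unproven input when it is needed later, exactly in the spirit of the preceding Metatheorems and of Metatheorem \ref{the:1.6} on the mean value of the constants $C^*_{2r}$.
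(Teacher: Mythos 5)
The statement is labelled a Conjecture in the paper, and the paper offers no proof of it whatsoever --- it simply states the Bateman--Horn special case with citations and later verifies it numerically for $f(n)=n^2-2$. Your treatment is therefore correct and consistent with the paper's: you rightly decline to prove it, and the Cram\'er-plus-local-densities heuristic you supply is the standard derivation of the constant $C(f)/d$ (your observation that convergence of the product reduces to the average behaviour of $N_f(q)$ matches the paper's use of $N_{f_{2r}}(p)-2=\chi(p)$ and $\sum_p\chi(p)/p$ in the quadratic case).
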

Cf.\ Davenport and Schinzel \cite{DS66}, and Hindry and
Rivoal \cite{HiRi05}. In the special case of the
polynomial
\begin{equation} \label{eq:8.5}
f_{2r}(n)=n^2-2r\qquad (r\in{\Bbb Z}\setminus 0),
\end{equation}
one finds that for $p\not|\, 2r$, using the Legendre
symbol,
\begin{equation} \label{eq:8.6}
N_{f_{2r}}(p)-2=\left(\frac{2r}{p}\right)=\chi(p).
\end{equation}
Here $\chi(p)$ generates a real character (different
from the principal character) belonging to a modulus
$m=m_{2r}$. The convergence of the product for
$C(f_{2r})$ thus follows from the known convergence of
series $\sum_p\,\chi(p)/p$. 

Fokko van de Bult \cite{Bu08} has computed 
\begin{equation} \label{eq:8.7}
C(f_{2})\approx 3.38,
\end{equation}
and counted 
$$\pi^*_2(x)=\pi_{f_2}(x)=\#\{p\le
x:\,p^2-2\;\mbox{prime}\}$$ for
$x=10,\,10^2,\,\cdots,\,10^8$. His results are in
excellent agreement with Conjecture \ref{con:8.1}. In the
table the number
$\pi^*_2(x)$ is compared to rounded values 
$$L^*_2(x)\;\;\mbox{of}\;\;1.69\;{\rm
li}_2(x)=1.69\int_2^x\,\frac{dt}{\log^2 t}.$$ 
The table also gives some ratios 
$$\rho(x)=\pi^*_2(x)/L^*_2(x).$$
These seem to converge to $1$ rather quickly!
\begin{table} \label{table:1}
\begin{tabular}{lllll} 
$x$     & $\pi^*_2(x)$ & $L^*_2(x)$  & $\rho(x)$  & \\
        &              &             &         & \\
$10$    &  4           &             &         & \\
$10^2$  & 13           &             &         & \\
$10^3$  & 52           &             &         & \\
$10^4$  & 259          & 274         & 0.945   & \\ 
$10^5$  & 1595         & 1599        & 0.997   & \\
$10^6$  & 10548        & 10560       & 0.999   & \\
$10^7$  & 74914        & 75223       & 0.996   & \\
$10^8$  & 563533       & 563804      & 0.9995  & \\
        &              &             &         &       
\end{tabular}
\caption{Counting prime pairs $(p,\,p^2-2)$}
\end{table} 

\smallskip
We can now discuss the functions 
\begin{equation} \label{eq:8.8}
D^*_{2r}(s)=\int_1^\infty
\frac{d\theta^*_{2r}(t)}{t^{4s}}\qquad(r\in{\Bbb N})
\end{equation} 
of (\ref{eq:7.2}). Assuming that the Bateman--Horn
conjecture is true for the polynomials
$f_{\pm 2r}(n)=n^2\mp 2r$, one obtains
the following asymptotic relation for the functions
$\theta^*_{2r}(x)$ of (\ref{eq:7.3}):
\begin{equation} \label{eq:8.9}
\theta^*_{2r}(x)=\sum_{q\le x;\,q^2\pm
2r\,{\rm prime}}\,\log^2 q\sim
\frac{C(f_{2r})+C(f_{-2r})}{2}\,x.
\end{equation} 
For us it will be convenient to write this relation in the
form
\begin{equation} \label{eq:8.10}
\theta^*_{2r}(x)\sim 2C^*_{2r}x.
\end{equation}
By the two-way Wiener--Ikehara theorem of \cite{Ko05} and
integration by parts, relation (\ref{eq:8.10}) is
equivalent to the statement that the difference
\begin{equation} \label{eq:8.11}
G^*_{2r}(s)=D^*_{2r}(s)-\frac{2C^*_{2r}}{4s-1}
\end{equation}
has good (that is, pseudofunction) boundary behavior as
$\si\searrow 1/4$. In particular $D^*_{2r}(s)$ must have a
first-order pole at $s=1/4$ with residue
$(1/2)C^*_{2r}$, and no other poles on the line
$\{\si=1/4\}$.

Before returning to the proof of Theorem \ref{the:1.1}
we give a supporting argument for Metatheorem
\ref{the:1.6}, which asserts that the constants
$C^*_{2r}$ have mean value one.

\setcounter{equation}{0} 
\section{A function $T^\la_2(s)$. Metatheorem
\ref{the:1.6}} \label{sec:9}
Using paths specified below we will study the function
\begin{align} \label{eq:9.1}
T^\la_2(s) & = \int_{L(c,B)}\Ga(z-s)
\frac{\ze'(2z)}{\ze(2z)}\,dz
\int_{L(c',B)}\Ga(w-s)\frac{\ze'(w)}{\ze(w)}\,
\cdot \notag \\ & \quad\; \cdot
\,M^\la(z+w-2s)\cos\{\pi(z-w)/2\}dw.
\end{align}
Here analogs to (\ref{eq:5.3}), (\ref{eq:5.4}) provide
the following expansion for $\la>0$, cf.\ (\ref{eq:7.2}):
\begin{align} \label{eq:9.2}
T^\la_2(s) &= \sum_{k,\,l}\,\La(k)\La(l)k^{-2s}l^{-s}
E^\la(k^2-l) \notag \\ &= D^*_0(2s)+
2\sum_{0<2r\le \la}\,E(2r/\la)D^*_{2r}(s)+H^\la_6(s),
\end{align}
where $D^*_0(s)=\sum_p\,(\log^2 p)/p^{2s}$
and $H^\la_6(s)$ is holomorphic for $\si>1/5$. Comparison
with $D_0(s)$ in (\ref{eq:2.3}) shows that 
\begin{equation} \label{eq:9.3}
D^*_0(s) =\frac{1}{(2s-1)^2}-
\sum_\rho\,\frac{1}{(2s-\rho)^2}
-\frac{2}{(4s-1)^2}+H_7(s),
\end{equation}
where $H_7(s)$ is holomorphic for 
$\si>\be/4$. Formula (\ref{eq:9.2}) may be used to define
$T^\la_2(s)$ as a holomorphic function for $\si>1/4$. 

In (\ref{eq:9.1}), assuming RH, one may take
$c_1=(1/4)+\eta$, $c_2=(1/2)+\eta$ and $c'_1=(1/2)+\eta$,
$c'_2=1+\eta$ with small $\eta>0$. Varying $\eta$, the
integral thus represents
$T^\la_2(s)$ as a holomorphic function for
$3/8<\si<1$ and $|\tau|<B$. We now move the $w$-path
$L(c',B)$ across the poles at the points $w=1$, $s$ and
$\rho$ to the path $L(d,B)$, where $d_1=-1/2$ and
$d_2=0$. Then the residue theorem gives  
\begin{equation} \label{eq:9.4}
T^\la_2(s) =
\int_{L(c,B)}\cdots\,dz\int_{L(0)}\cdots\,dw
+U^\la_2(s)=T^{\la,*}_2(s)+U^\la_2(s),
\end{equation}
say, where 
\begin{equation} \label{eq:9.5}
U^\la_2(s) = \int_{L(c,B)}
\Ga(z-s)\frac{\ze'(2z)}{\ze(2z)}\,J(z,s)dz,
\end{equation}
with $J(z,s)$ as in (\ref{eq:6.4}). Recall
that the apparent poles of $J(z,s)$ at the points $s=1$
and $s=\rho$ cancel out. 

We next move the $z$-path $L(c,B)$ in the integral for
$U^\la_2(s)$ to $L(d,B)$. Picking
up residues at $z=s$, $1/2$ and the zeros
$\rho'/2$ of $\ze(2z)$, the result is
\begin{align} \label{eq:9.6} 
U^\la_2(s) &=
\int_{L(d,B)}\Ga(z-s)\frac{\ze'(2z)}{\ze(2z)}\,
J(z,s)dz +V^\la_2(s)\notag \\ &= U^{\la,*}_2(s)+
V^\la_2(s),
\end{align}
say, where
\begin{align} \label{eq:9.7}
V^\la_2(s) &= \frac{\ze'(2s)}{\ze(2s)}\,J(s,s)
-(1/2)\Ga\{(1/2)-s\}J(1/2,s) \notag \\ & \qquad
+ \sum_{\rho'}\,(1/2)
\Ga\{(\rho'/2)-s\}J(\rho'/2,s).
\end{align}
The integrals for $T^{\la,*}_2(s)$ and $U^{\la,*}_2(s)$ in
(\ref{eq:9.4}) and (\ref{eq:9.6}) will define holomorphic
functions for $1/4\le\si<1$.

Let $\cal{S}$ denote the strip $\{1/4<\si<1/2\}$. We
have to know the boundary behavior of $T^\la_2(s)$ as
$\si\searrow 1/4$. What sort of poles on the line
$L(1/4)=\{\si=1/4\}$ will result from the three products
in the formula for $V^\la_2(s)\,$?
The first product involves $J(s,s)$, which by
(\ref{eq:6.4}) is holomorphic on $L(1/4)$,
and $(\ze'/\ze)(2s)$, which has poles at the points
$s=\rho'/2$. The resulting poles have principal parts
\begin{equation} \label{eq:9.8}
\frac{(1/2)J(\rho'/2,\rho'/2)}{s-\rho'/2}.
\end{equation}
Turning to the second product, the function
$J(1/2,s)$ is holomorphic on $L(1/4)$, except for a
simple pole at $s=1/4$ due to the pole of $M^\la(Z)$ for
$Z=1$. The other factor is $-(1/2)\Ga\{(1/2)-s\}$, and by
a short calculation, cf.\ (\ref{eq:2.7}), the principal
part of the pole at $s=1/4$ works out to
\begin{equation} \label{eq:9.9}
\frac{(1/2)A^E\la}{s-1/4},\quad\mbox{where}\;\;A^E=\int_0^1
E(\nu)d\nu.
\end{equation} 

In the third product the function
$J(\rho'/2,s)$ is holomorphic on $L(1/4)$. However, the 
factors $(1/2)\Ga\{(\rho'/2)-s\}$ introduce poles
at the points $s=\rho'/2$. The poles in the
product have principal part 
\begin{equation} \label{eq:9.10}
\frac{-(1/2)J(\rho'/2,\rho'/2)}{s-\rho'/2},
\end{equation}
hence they cancel the poles at the points $s=\rho'/2$
in (\ref{eq:9.8}). The third product also generates a
double series $\Si^\la_{2,2}(s)$: 
\begin{align} \label{eq:9.11}
\Si^\la_{2,2}(s) &\stackrel{\mathrm{def}}{=}
\sum_{\rho,\,\rho'}\,(1/2)\Ga(\rho-s)\Ga\{(\rho'/2)-s\}
\,\cdot \notag \\ & \qquad
\cdot M^\la(\rho-2s+\rho'/2)\cos\{\pi(\rho-\rho'/2)\}.
\end{align}
The series is absolutely convergent for $3/8<\si<1/2$.
Its sum will have an analytic continuation to $\cal{S}$,
also denoted $\Si^\la_{2,2}(s)$, but we do not know much
about its behavior near the line $L(1/4)$; see below.

In support of the hypothesis that the poles of
$V^\la_2(s)$ at the points $s=\rho'/2$ cancel out one
may analyze an integral $T^\la_{1,2}(s)$ related to
$T^\la_2(s)$. It is obtained from (\ref{eq:9.1}) by
interchanging the roles of $(\ze'/\ze)(2\,\cdot)$ and
$(\ze'/\ze)(\cdot)$. The new integral is of course equal
to $T^\la_2(s)$. In the analysis the role of
$J(z,s)$ is now taken by
\begin{align} \label{eq:9.12}
J_2(z,s) &=
\frac{\ze'(2s)}{\ze(2s)}\,M^\la(z-s)\cos\{\pi(z-s)/2\} 
\notag \\ & 
-(1/2)\Ga\{(1/2)-s\}M^\la\{z+(1/2)-2s\}\cos\{\pi(z-1/2)/2\}
\\ & 
+(1/2)\sum_\rho\,\Ga\{(\rho/2)-s\}M^\la\{z+(\rho/2)-2s\}
\cos\{\pi(z-\rho/2)/2\}.\notag
\end{align}
Here the apparent poles at the points $s=1/2$
and $s=\rho/2$ cancel out.
\begin{summary} \label{sum:9.1}
Assume RH. Combination of (\ref{eq:9.2}) and the
subsequent results shows that for 
$3/8<\si<1/2$,
\begin{align} \label{eq:9.13}
T^\la_2(s) &=
D^*_0(2s)+2\sum_{0<2r\le\la}\,E(2r/\la)D^*_{2r}(s)
  + H^\la_8(s) \notag \\ &= \frac{(1/2)A^E\la}{s-1/4}
+\Si^\la_{2,2}(s)+H^\la_9(s),
\end{align}
where $H^\la_8(s)$ and $H^\la_9(s)$ are holomorphic for
$1/4\le\si<1/2$.
\end{summary} 
Observe that the (analytic continuation of the) sum
$\Si^\la_{2,2}(s)$ must have a second-order pole at the
point $s=1/4$. Indeed, $D^*_0(2s)$ has a
quadratic pole at $s=1/4$, see (\ref{eq:9.3}),
and by sieving, the functions $D^*_{2r}(s)$ cannot have
a worse singularity at $s=1/4$ than a first-order pole. In
Section \ref{sec:8} it was made plausible that the
functions $D^*_{2r}(s)$ indeed have a first-order pole at
$s=1/4$. What can we say about the mean value of the
residues $(1/2)C^*_{2r}$, or of the numbers
$C^*_{2r}\,$? By (\ref{eq:9.13}) and (\ref{eq:8.10}) the
residue of $\Si^\la_{2,2}(s)$ at $s=1/4$ is equal to
\begin{equation} \label{eq:9.14}
R^*(\la)=\sum_{0<2r\le\la}\,E(2r/\la)C^*_{2r}-
(\la/2)\int_0^1 E(\nu)d\nu. 
\end{equation}
Now it is plausible that this residue is $o(\la)$ as
$\la\to \infty$. Indeed, $\la$ occurs in the terms of
$\Si^\la_{2,2}(s)$ only as a factor
$\la^{\rho-2s+\rho'/2}$; cf.\ the considerations in
Section \ref{sec:4}. Assuming $R^*(\la)=o(\la)$, and
letting $E(\nu)\le 1$ approach the constant function
$1$ on $[0,1]$, it follows from (\ref{eq:9.14}) that 
\begin{equation} \label{eq:9.15}
\sum_{0<r\le\la/2}\,C^*_{2r}\sim\la/2\quad\mbox{as}\;\;\la\to
\infty.
\end{equation}
Thus the numbers $C^*_{2r}$ should have mean value $1$,
as asserted in Metatheorem \ref{the:1.6}. The metatheorem
is supported by numerical evidence: a computation of the
first fifteen constants $C^*_{2r}$ by Fokko van de Bult
\cite{Bu08} gave their average as $0.98$.
\begin{remark} \label{rem:9.2}
Simple adaptation of our heuristics and accompanying
numerical results indicate that relation (\ref{eq:9.15})
and Metatheorem \ref{the:1.6} can be extended to the case
of prime pairs $(p,\,p^k\pm 2r)$ with $k\ge 3$; see
\cite{FK08}.
\end{remark}

\setcounter{equation}{0} 
\section{Metatheorem \ref{the:1.1} for $\be=1/2$ and
Metatheorem \ref{the:1.4}} 
\label{sec:10}
Taking $1/2<\si<1$, Theorem \ref{the:7.3} shows that 
\begin{align} \label{eq:10.1}
\Si^\la_*(s) & \stackrel{\mathrm{def}}{=}\Si^\la(s)-D_0(s)
=2\sum_{0<2r\le\la}\,E(2r/\la)\{
D^0_{2r}(s)+2D^*_{2r}(s)\} \notag
\\ &\qquad\qquad\qquad\qquad
-\frac{A^E\la}{s-1/2}+2A^E\la\sum_\rho\,
\frac{1}{s-\rho/2}+H^\la_*(s),
\end{align}
where $A^E=\int_0^1 E(\nu)d\nu$. The error term
$H^\la_*(s)$ is holomorphic for $1/6<\si<1$. To complete
the proof of Theorem \ref{the:1.1} we have to deal with
the case $\be=1/2$, so that RH holds. Suppose now that
for $2r\le\la$ and $x\to\infty$,
\begin{equation} \label{eq:10.2}
\theta_{2r}(x)-2C_{2r}x \ll x^{1/2}/\log^2 x.
\end{equation}
Then the corresponding functions 
$G^0_{2r}(s)=D^0_{2r}(s)-2C_{2r}/(2s-1)$
of (\ref{eq:2.2}) have continuous boundary
values for $\si\searrow 1/4$; cf.\ (\ref{eq:1.17}).
 
On the basis of Section \ref{sec:8} we may plausibly
assume that the functions
$G^*_{2r}(s)=D^*_{2r}(s)-2C^*_{2r}/(4s-1)$
show `good' (pseudofunction) boundary behavior for
$\si\searrow 1/4$. Hence by (\ref{eq:10.1}), the function
$\Si^\la_*(s)$ would have a `good' extension to the strip
$1/4\le\si<1$, apart from first-order poles at $s=1/2$,
$1/4$ and the points $\rho/2$. `Good' meaning: holomorphy
for $\si>1/4$ and good boundary behavior after
subtraction of the poles. As in Section
\ref{sec:4}, the pole at $s=1/2$ of $\Si^\la_*(s)$, or of
the double sum
$\Si^\la_2(s)$ in (\ref{eq:3.4}), will have residue
$R(1/2,\la)$ as in (\ref{eq:4.3}). By the mean-value
property of the constants $C_{2r}$ this residue is
$o(\la)$ as $\la\to\infty$. We recall that this was not
surprising because $\la$ occurs in the terms of
$\Si^\la_2(s)$ only as a factor $\la^{\rho+\rho'-2s}$. 

Since by our assumption (\ref{eq:10.2}) the functions
$D^0_{2r}(s)$ would have no pole at $s=1/4$, the pole of
$\Si^\la_*(s)$ or $\Si^\la_2(s)$ at that point would have
residue
\begin{equation} \label{eq:10.3}
R(1/4,\la)=2\sum_{0<2r\le\la}\,E(2r/\la)C^*_{2r}=
(\la/2)\int_0^1 E(\nu)d\nu+R^*(\la),
\end{equation} 
with $R^*(\la)$ as in (\ref{eq:9.14}). In Section
\ref{sec:9} it was made plausible that $R^*(\la)=o(\la)$
as $\la\to\infty$. We used both numerical
evidence {\it and} the argument that the terms of the
double sum $\Si^\la_{2,2}(s)$ contain $\la$ only as a
factor $\la^{\rho-2s+\rho'/2}$. However, the latter
argument would {\it also suggest} that
$R(1/4,\la)=o(\la)$. Indeed, the terms in the double
series $\Si^\la_2(s)$ of (\ref{eq:3.4}) contain $\la$
only as a factor $\la^{\rho+\rho'-2s}\,$! 

The contradiction indicates that assumption 
(\ref{eq:10.2}) is false, and that formula (\ref{eq:10.3})
for $R(1/4,\la)$ is incorrect. It is most likely that the
functions $D^0_{2r}(s)$ have poles at the point $s=1/4$,
and that these poles more or less cancel those of the
functions $2D^*_{2r}(s)$. Thus the true residue
$R(1/4,\la)$ of $\Si^\la_2(s)$ at the point
$s=1/4$ may still be $o(\la)$ as $\la\to\infty$. Note
also that by (\ref{eq:10.1}), the (true) residue
$R(1/4,\la)$ is equal to $0$ for $0<\la\le 2$. Combining
our observations, the simplest hypothesis would be 
that $\Si^\la_*(s)$ {\it does not have a pole at} $s=1/4$
for any value of $\la$! Letting $\la$ increase from $2$
on, it would follow that $D^0_{2r}(s)$ has a pole at
$s=1/4$ with residue $-C^*_{2r}$ for every $r$. This
contradiction to (\ref{eq:10.2}) would establish
Metatheorem \ref{the:1.1}! 

One could also argue on the basis of the points
$s=\rho/2$. Since $D^*_{2r}(s)$ would have no poles at
those points, assumption (\ref{eq:10.2}) would require
poles of $\Si^\la_*(s)$ or $\Si^\la_2(s)$ at $s=\rho/2$
with residue $2A^E\la$. But this would contradict the
assumption that the residues are $o(\la)$ which was
reasonable because the terms of $\Si^\la_2(s)$ contain
$\la$ only as a factor $\la^{\rho+\rho'-2s}$. Thus
(\ref{eq:10.2}) must be incorrect for many values of $r$.
The simplest explanation of a residue $o(\la)$ for 
$\Si^\la_2(s)$ would be that the functions $D^0_{2r}(s)$
have poles at $s=\rho/2$ with residue $-2C_{2r}$. Indeed,
we know that 
$$-2\sum_{0<2r\le\la}\,E(2r/\la)2C_{2r}+2A^E\la=o(\la)
\quad\mbox{as}\;\;\la\to \infty.$$ 

We now turn to Metatheorem \ref{the:1.4}. Using Lemma
\ref{lem:7.1}, the preceding arguments make it plausible
that, indeed,
\begin{align} \label{eq:10.4}
D^0_{2r}(s) &= D_{2r}(s)-2D^*_{2r}(s)-H_{2,r}(s) \notag
\\ &=
\frac{2C_{2r}}{2s-1}-\frac{4C^*_{2r}}{4s-1}-
4C_{2r}\sum_\rho\,\frac{1}{2s-\rho}+H^0_{2r}(s),
\end{align}   
where $H^0_{2r}(s)$ is holomorphic for $\si>1/4$ and has
good boundary behavior for $\si\searrow 1/4$.

In the case $\be=1/2$ Metatheorem \ref{the:1.4} suggests
the approximation
\begin{equation} \label{eq:10.5}
\theta_{2r}(x)=2C_{2r}x - 4C^*_{2r}x^{1/2}
- 4C_{2r}\sum_\rho\,x^\rho/\rho+o(x^{1/2}).
\end{equation}
Finally, to arrive at Metatheorem \ref{the:1.5} one would
use the formula
$$\pi_{2r}(x)=\int_2^x \frac{d\theta_{2r}(t)}{\log^2 t}.$$

\bigskip

\noindent{\scshape KdV Institute of Mathematics, 
University of  Amsterdam, \\
Plantage Muidergracht 24, 1018 TV Amsterdam, Netherlands}

\noindent{\it E-mail address}: 
{\tt korevaar@science.uva.nl}

\enddocument